\DeclareSymbolFont{AMSb}{U}{msb}{m}{n}
\DeclareSymbolFontAlphabet{\mathbb}{AMSb}
\newcommand{\cK}{{\cal K}}
\newcommand{\cM}{{\cal M}}
\newcommand{\cS}{{\cal S}}
\newcommand{\cV}{{\cal V}}
\newcommand{\cW}{{\cal W}}
\newcommand{\cX}{{\cal X}}
\newcommand{\al}{\alpha}
\newcommand{\ci}{\cite}
\newcommand{\de}{\delta}
\newcommand{\De}{\Delta}
\newcommand{\ds}{\displaystyle}
\newcommand{\fr}{\frac}
\newcommand{\la}{\label}
\newcommand{\lam}{\lambda}
\newcommand{\na}{\nabla}
\newcommand{\vp}{\varphi}
\newcommand{\ov}{\overline}
\newcommand{\pa}{\partial}
\newcommand{\re}{\ref}
\newcommand{\Si}{\Sigma}
\newcommand{\si}{\sigma}
\newcommand{\ve}{\varepsilon}
\newcommand\C{{\mathbb C}}
\newcommand\R{{\mathbb R}}
\newcommand\N{{\mathbb N}}
\newcommand\Z{{\mathbb Z}}
\newcommand{\vka}{\varkappa}
\newcommand{\cm}{{\rm m}}
\newcommand{\5}{{\hspace{0.5mm}}}
\newcommand{\rRe}{{\rm Re\5}}
\newcommand{\rIm}{{\rm Im\5}}
 \newcommand{\st}{\stackrel}
\newcommand{\toLt}{\st{L^2({T_N})}{-\!\!\!-\!\!\!-\!\!\!\!\longrightarrow}} 
\newcommand{\toLwt}{\st{L^2_w({T_N})}{-\!\!\!-\!\!\!-\!\!\!\!\longrightarrow}} 
\newcommand{\toLC}{\st{C({T_N})}{-\!\!-\!\!\!\!\longrightarrow}} 
\newcommand{\tocX}{\st{\cX}{-\!\!\!\!\longrightarrow}}
\newcommand{\Ker}{{\rm Ker\5}}
\renewcommand{\theequation}{\thesection.\arabic{equation}}
\newtheorem{theorem}{Theorem}[section]
\renewcommand{\thetheorem}{\arabic{section}.\arabic{theorem}}
\newtheorem{definition}[theorem]{Definition}
\newtheorem{lemma}[theorem]{Lemma}
\newtheorem{remark}[theorem]{Remark}
\newtheorem{remarks}[theorem]{Remarks}
\newtheorem{cor}[theorem]{Corollary}
\newtheorem{proposition}[theorem]{Proposition}
\newcommand{\bd}{\begin{definition}}
 \newcommand{\ed}{\end{definition}}
\newcommand{\bt}{\begin{theorem}}
 \newcommand{\et}{\end{theorem}}
\newcommand{\bp}{\begin{proposition}}
 \newcommand{\ep}{\end{proposition}}
\newcommand{\bl}{\begin{lemma}}
 \newcommand{\el}{\end{lemma}}
\newcommand{\bc}{\begin{cor}}
 \newcommand{\ec}{\end{cor}}
\newcommand{\br}{\begin{remark} }
 \newcommand{\er}{\end{remark}}
\newcommand{\brs}{\begin{remarks} }
 \newcommand{\ers}{\end{remarks}}
\begin{document}

\begin{titlepage}
\vspace{2cm}

\begin{center}
{\Large\bf 
On   stability of ground states for  finite crystals
\medskip\\
 in the Schr\"odinger-Poisson model
}
\end{center}
\bigskip\bigskip

 \begin{center}
{\large A. Komech}
\footnote{
Supported partly 
by 
Austrian Science Fund (FWF): P28152-N35,
and the grant of  RFBR 16-01-00100.}
\\
{\it Faculty of Mathematics of Vienna University\\
and Institute for Information Transmission Problems RAS } \\
e-mail:~alexander.komech@univie.ac.at
\bigskip\\
{\large E. Kopylova}
\footnote{
Supported partly by  
Austrian Science Fund (FWF): P27492-N25,
and the grant of RFBR 16-01-00100.}
\\
{\it Faculty of Mathematics of Vienna University\\
and Institute for Information Transmission Problems RAS} \\
 e-mail:~elena.kopylova@univie.ac.at
\end{center}
\vspace{1cm}

 \begin{abstract}
We consider the 
Schr\"odinger-Poisson-Newton equations for finite  crystals
under periodic boundary conditions with one ion per cell of a lattice.
The electrons are described by one-particle Schr\"odinger equation.

Our main results are 
i) the global dynamics with  moving ions;
ii) the  orbital stability of periodic  ground state under a novel  
 Jellium and Wiener-type conditions on the ion charge density.
 Under the Jellium condition both ionic and  electronic charge densities  
 for the ground state  are uniform. 
\end{abstract}
\bigskip

{\bf Key words and phrases:}
crystal; lattice; Schr\"odinger-Poisson  equations; 
ground state;  stability; orbital stability;
Hamilton structure; energy conservation; charge conservation; $U(1)$-invariance;  
 Hessian; Fourier transform.
\bigskip

{\bf AMS subject classification:} 35L10, 34L25, 47A40, 81U05

\end{titlepage}

\section{Introduction}
The
first mathematical results on the stability of matter were obtained 
by Dyson and Lenard 
in \ci{D1967, DL1968} 
where the energy bound from below
was established.
The thermodynamic limit
for the Coulomb systems  
was studied first by Lebowitz and Lieb 
\ci{LL1969,LL1973}, see the survey and further development in \ci{LS2010}.
These results were extended  by Catto, Le Bris, Lions 
and others to Thomas-Fermi and Hartree-Fock models
\ci{CBL1998,CBL2001,CBL2002}. 
Further results in this direction were established 
by Canc\'es,  Lahbabi, Lewin, Sabin, Stoltz, and others
 \ci{CLL2013,CS2012, BL2005, LS2014-1, LS2014-2}.
All these results concern  either
the convergence of
the ground state of finite particle systems
in the thermodynamic limit or 
the existence of the ground state
for infinite particle systems. 

However,
the dynamical stability of crystals 
with moving ions
was never considered previously. 
This stability is 
necessary for a rigorous analysis 
of fundamental quantum phenomena in the solid state physics: 
heat conductivity, electric conductivity, thermoelectronic emission, photoelectric effect, 
Compton effect, 
etc., see \ci{BLR}.

In present paper we
consider the coupled
Schr\"odinger-Poisson-Newton equations for finite  crystals
under periodic boundary conditions with one ion per cell of a lattice.
We
construct the global dynamics   of crystals
with  moving ions and prove the conservation of energy and charge.

Our main result
is  the  orbital stability of every 
 ground state with periodic arrangement of ions
under novel `Jellium' and Wiener-type  conditions on the ion charge density.

The electron cloud is described by  one-particle 
Schr\"odinger equation.
The ions are described as classical particles   
that corresponds to the  
Born and Oppenheimer  approximation.
The ions interact with the electron cloud via 
the scalar potential, which  is a solution to the corresponding Poisson equation.

This model
does not respect the Pauli exclusion principle for electrons.
However, it provides a convenient framework 
to introduce suitable functional tools that  might be useful 
for physically 
more realistic models (Thomas-Fermi, Hartree-Fock, and second quantized models). 
In particular, we find a novel 
stability criterion (\re{Wai}), (\re{W1}).
\medskip

We consider crystals which occupy the finite torus ${T_N}:=\R^3/N\Z^3$
and have one ion per cell of the cubic lattice ${\Gamma_N}:=\Z^3/N\Z^3$, where $N\in\N$.
The cubic lattice  is chosen for the simplicity of notations.
We denote by  $\sigma(x)$ the charge density of one ion,
\begin{equation}\la{ro+}
\si\in C^2({T_N}),\qquad
\int_{{T_N}} \sigma(x)dx=eZ>0, 
\end{equation} 
where $e>0$ is the elementary charge.
Let $\psi(x,t)$ be the wave function of the electron field, 
$q(n,t)$ denotes the ion displacement  from the reference position $n\in\Gamma_N$,
and $\Phi(x)$ be the electrostatic  potential generated by the ions and electrons.
We assume $\hbar=c=\cm=1$, where $c$ is the speed of light and $\cm$ is the electron mass.
Then the considered coupled equations  read
\begin{eqnarray}\la{LPS1}
i\pa_t\psi(x,t)\!\!&=&\!\!-\fr12\De\psi(x,t)-e\Phi(x,t)\psi(x,t),\qquad x\in{T_N},
\\
\nonumber\\
-\De\Phi(x,t)\!\!&=&\!\!\rho(x,t):=\sum_{n\in{\Gamma_N}}
\sigma(x-n-q(n,t))-e|\psi(x,t)|^2,\qquad x\in{T_N},
\la{LPS2}
\\
\nonumber\\
M\ddot q(n,t)
\!\!&=&\!\!-(\na\Phi(x,t),\sigma(x-n-q(n,t))), 
\qquad n\in{\Gamma_N}.
\la{LPS3}
\end{eqnarray}
Here the 
brackets $(\cdot,\cdot)$
 stand for the  scalar product on the real Hilbert
space $L^2({T_N})$ and for its different extensions,  and $M>0$ is the mass of one ion.
All derivatives here and below are understood in the sense of distributions.
Similar finite periodic approximations of crystals are treated in all textbooks on 
quantum theory of solid state \ci{Born, Kit, Zim}. 
However,  the stability of ground states in this model was never discussed.

Obviously, 
\begin{equation}\la{r0}
\ds\int_{T_N}\rho(x,t)dx=0 
\end{equation}
by the  Poisson equation (\re{LPS2}).
Hence, the potential $\Phi(x,t)$ can be eliminated from  the system (\re{LPS1})\,-\,(\re{LPS3})
using the operator $G:=(-\De)^{-1}$, see (\re{fs}) for a more precise definition. 
Substituting $\Phi(\cdot,t)=G\rho(\cdot,t)$
into equations (\ref{LPS1}) and (\ref{LPS3}), we can write the system as
\begin{equation}\la{vf}
\dot X(t)=F(X(t)),\qquad t\in\R,
\end{equation}
where $X(t)=(\psi(\cdot,t), q(\cdot,t), p(\cdot,t))$ with $p(\cdot,t):=\dot q(\cdot,t)$.
The system (\ref{LPS1})\,-\,(\ref{LPS3}) is equivalent,  up to a  gauge transform (see the next section), 
to equation (\ref{vf}) with the normalization
\begin{equation}\la{rQ}
\Vert\psi(\cdot,t)\Vert_{L^2({T_N})}^2=ZN^3,\qquad t\in\R,
\end{equation}
which follows from (\re{r0}). If the integral (\ref{ro+}) vanishes, 
we have $Z=0$ and  $\psi(x,t)\equiv 0$.

We will identify the complex  functions $\psi(x)$ with 
two real functions $\psi_1(x):=\rRe\psi(x) $ 
and  $\psi_2(x):=\rIm\psi(x)$.
Now
equation (\ref{vf}) can be written as the Hamilton system
\begin{equation}\la{HSi}
\pa_t \psi_1(x,t)=\fr12 \pa_{\psi_2(x)}E,~~\pa_t \psi_2(x,t)=-\fr12\pa_{\psi_1(x)}E,~~
\pa_t q(n,t)= \pa_{p(n)}E,~~\pa_t p(n,t)=-\pa_{q(n)} E.
\end{equation}
Here  the Hamilton functional (energy) reads 
\begin{equation}\la{Hfor}
  E(\psi, q, p)=\fr12\int_{{T_N}}|\na\psi(x)|^2dx+\fr12(\rho,G\rho)+\sum_{n\in{\Gamma_N}} \fr{p^2(n)}{2M},
\end{equation}
where  $ q:=(q(n): ~~n\in{\Gamma_N})\in[{T_N}]^{\ov N}$, $ p:=(p(n):~~n\in{\Gamma_N})\in\R^{3\ov N}$ with $\ov N:=N^3$, and 
\begin{equation}\la{Hfor2}
\rho(x):=
\sum_{n\in{\Gamma_N}}\si(x-n-q(n))-e|\psi(x)|^2,\qquad x\in{T_N}.
\end{equation}

Our main goal is the stability of 
ground states, i.e.,
solutions to (\re{LPS1}) \,-\, (\re{LPS3})
with minimal (zero) energy  (\re{Hfor}).
We will consider only  ${\Gamma_N}$-periodic ground states 
 (nonperiodic ground states exist for some degenerate 
densities $\si$, 
see Remark \re{r1} ii) and Section \re{snp}).

We will see that all these ${\Gamma_N}$-periodic ground states  can be stable  
depending on the choice of the ion density $\sigma$.
However,
we study very special densities 
$\sigma$ satisfying some conditions below. Namely,
we will assume
 the following condition on the ion charge density,
\begin{equation}\la{Wai}
 \mbox{\bf The Jellium Condition:}~~~~~ \hat\si(\xi)
  :=\int_{T_N} e^{i\xi x}\si(x)dx
 =0,\quad \xi\in {\Gamma^*_1}\setminus 0,
~~~~~~~~~~~~ ~~~~~~~~~~~~~~~~~~ ~
\end{equation}
where ${\Gamma^*_1}:=2\pi\Z^3$.
This condition immediately implies that the periodized ion charge density is a positive 
constant everywhere on the torus:
\begin{equation}\la{sipi}
	\sum_{n\in{\Gamma_N}}\si(x-n)\equiv eZ,\qquad x\in{T_N}.
\end{equation}
 The simplest example of such a 
$\sigma$ is a constant over the unit cell of a given lattice, which is what physicists 
usually call Jellium \cite{GV2005}. We give further examples in Section \re{sex}.
Here we study this model in the rigorous context of the Schr\"odinger-Poisson equations.

Furthermore, we will assume a spectral property of the Wiener type
\begin{equation}\la{W1}
\mbox{\bf The Wiener Condition:}~~~\Si(\theta):=\sum_{m\in\Z^3}\Big[
 \fr{\xi\otimes\xi}{|\xi|^2}|\hat\si(\xi)|^2\Big]_{\xi=\theta+2\pi m}>0,\
\quad \theta\in \Pi^*_N\setminus {\Gamma^*_1},
\end{equation}
where the Brillouin zone $\Pi^*_N$ is defined by
\begin{equation}\la{PPG}
 \Pi^*_N:= \{\xi=(\xi^1,\xi^2,\xi^3)\in{\Gamma^*_N}:0\le \xi^j\le 
{\color{red}  2\pi},~~j=1,2,3\},\quad{\Gamma^*_N}:=\fr{2\pi}N\Z^3.
\end{equation}
This condition is
 an analog of the Fermi Golden Rule
for crystals. 
It is independent of (\re{Wai}).
 We have introduced  
conditions of type (\re{Wai}) and  (\re{W1}) in \ci{KKpl2015} 
 in the framework of infinite crystals.
\br\la{rW}
{\rm
i) The series \eqref{W1} converges for $\theta\in{\Gamma^*_N}\setminus{\Gamma^*_1}$
by the Parseval identity since $\sigma\in L^2(T_N)$ by \eqref{ro+}.
\\
ii) The matrix $\Si(\theta)$ is $\Gamma^*_1$-periodic outside  $\Gamma^*_1$.
Thus, (\re{W1}) means that $\Si(\theta)$ is a positive matrix
for $\theta\in \ov\Pi^*_N\setminus 0$, where $\ov\Pi^*_N$
is the `discrete torus' $\Gamma_N/\Gamma^*_1$.
}
\er
The series \eqref{W1} is a nonnegative matrix.
Hence,  the Wiener condition holds `generically'. For example it holds if
\begin{equation}\la{W1s}
\hat\si(\xi)\ne 0,\qquad \xi\in {\Gamma^*_N}\setminus{\Gamma^*_1},
\end{equation}
i.e., (\re{Wai}) are the only zeros of $\hat\si(\xi)$. However,  (\re{W1}) does not hold 
for the simplest Jellium model, when $\sigma$ is constant on the unit cell, see (\re{sic}) and (\re{sJM}). 

The energy (\re{Hfor}) is nonnegative, and its minimum is zero.
We show in Lemma \re{Jgs} that under Jellium condition (\re{Wai}) all ${\Gamma_N}$-periodic
ground states are zero energy stationary solutions of the form
\begin{equation}\la{gr}
S_{\al,r}=(\psi_\al, \ov r,0),\qquad \al\in [0,2\pi], \quad r\in{T_N},
\end{equation}
where $\psi_\al(x)\equiv e^{i\al}\sqrt{Z}$ and $\ov r\in [{T_N}]^{\ov N}$  is defined by
\begin{equation}\la{gr2}
 \ov r(n)=r,\qquad n\in{\Gamma_N}.
\end{equation}
The corresponding electronic charge  density reads
\begin{equation}\label{roZ}
 \rho^e(x):=-e|\psi_\al(x)|^2\equiv -eZ,\qquad x\in {T_N}.
 \end{equation}
Hence,
the corresponding total charge density (\re{Hfor2}) identically vanishes by (\re{sipi}). 
Let us emphasize that 
 both ionic and  electronic charge densities  are uniform for the ground state under the Jellium condition.

\smallskip

Our main result (Theorem \re{tm}) is the stability  
of the real 4-dimensional `solitary manifold'
\begin{equation}\la{cS}
\cS=\{S_{\al ,r}: 
~\al \in [0,2\pi],~r\in{T_N} \}.
\end{equation}
The stability means that any solution $X(t)=(\psi(\cdot,t), q(\cdot,t), p(\cdot,t))$ 
to (\re{vf})
 with initial data, 
lying in the vicinity of
the manifold $\cS$, is close to it uniformly in time. 
This is the `orbital stability' in the sense of \ci{GSS87}, since the manifold
$\cS=S^1\times{T_N}\times \{0\}$ is the orbit of the symmetry group $U(1)\times{T_N}$.
 Obviously, 
\begin{equation}\la{ES}
E(S)= 0,\qquad S\in\cS.
\end{equation}

Let us comment on our approach.
We prove the local well-posedness for the system  (\re{LPS1})\,-\,(\re{LPS3}) 
by
the contraction mapping principle.
The global  well-posedness for the equation (\re{vf})
and the charge and energy conservation
follow by the Galerkin approximations and the uniqueness of solutions. 
We apply  the charge 
conservation to return back to the system  (\re{LPS1})\,-\,(\re{LPS3}).

The orbital stability of the solitary manifold $\cS$ 
is deduced
from  the lower energy estimate
\begin{equation}\la{BLi}
E(X)\ge \nu\,d^2(X,\cS)\qquad{\rm if}\qquad d(X,\cS)\le \delta,\quad X\in\cM,
\end{equation}
where 
$\cM$ is the manifold defined by  the normalization  (\re{rQ}) (see Definition \re{dM});
 $\nu,\delta>0$ and `$d$' is the distance in the `energy norm'.
This estimate obviously implies  the stability of the solitary manifold $\cS$.
We deduce  (\re{BLi}) from the positivity of the Hessian $E''(S)$
for $S\in \cS$ in the orthogonal directions to $\cS$ on the manifold $\cM$.
The Jellium and Wiener conditions are sufficient 
for this positivity.
We expect that these conditions are also necessary;
however, this is still an open  problem.
Anyway,
the positivity can break
down when these conditions  fail. We have shown this in \ci[Lemma 10.1]{KKpl2015}
in the context of infinite crystals, however the proof extends directly to the finite
crystals.
The Jellium condition  cancels the negative
energy which is provided by the electrostatic instability
 (`Earnshaw's Theorem' \ci{Stratton}, 
see \ci[Remark 10.2]{KKpl2015}).
\medskip
 
Our main novelties are the following.
\smallskip\\
I. The well-posedness and the energy and charge conservation for 
the system  (\re{LPS1})\,-\,(\re{LPS3}).
\smallskip\\
II.  The 
calculation of  all ground states; in particular, the existence of 
ground states with periodic and with
non-periodic  ion arrangements.
\smallskip\\
III. The orbital stability 
of ${\Gamma_N}$-periodic ground states.
\smallskip\\
IV. The lower energy estimate (\re{BLi}).
\begin{remarks}\label{r1}
{\rm
 i) In the case of infinite crystal, corresponding to $N=\infty$, the orbital stability seems  impossible. 
 Namely, for  $N=\infty$ the estimates (\ref{eq}), (\ref{GP2}) and (\ref{fp}) break down,
 as well as the estimate of type (\ref{BLi}) which is due to the discrete spectrum of the energy
 Hessian $E''(S)$ on the compact torus.
\smallskip\\
ii) We  show that the identity of type 
(\re{sipi}) holds  for a wide set of  
arrangements of ions  which are not $\Gamma_1$-periodic,
if $\si$ satisfy additional spectral conditions.
The corresponding examples are given, but in all our examples 
 the Wiener condition breaks down. We suppose that the Wiener condition provides
the periodicity (\re{gr2}), however this is a challenging open problem, 
see Section \re{spp}.  We prove the orbital stability only  for $\Gamma_1$-periodic ground states.
\smallskip\\
iii)  The extension of our results to the  Hartree-Fock model is not straightforward.
Even the existance of solutions requires quite novel ideas as well as the calculation of the null space
of the Hessian.
}
\end{remarks}

Let us comment on previous works in this direction.
\smallskip\\
 The ground state for crystals 
in the Schr\"odinger-Poisson model was constructed in 
\ci{K2014,K2015}, and its linear stability was proved in \ci{KKpl2015}.

In the Hartree-Fock model
the crystal ground state 
 was constructed for the first time by Catto, Le Bris, and  Lions  \ci{CBL2001,CBL2002}.
For the Thomas-Fermi model, see \ci{CBL1998}.

In \ci{CS2012}, Canc\'es and Stoltz have established the well-posedness  for 
the dynamics of  
local perturbations of the crystal ground state 
in the  {\it random phase approximation}
for the reduced  Hartree-Fock equations
with the Coulomb  pairwise interaction potential $w(x-y)=1/|x-y|$.
The  space-periodic nuclear potential
in the equation \ci[(3)]{CS2012}
does not depend on time that corresponds to 
the fixed nuclei positions. 
The nonlinear Hartree-Fock dynamics
for crystals
with the Coulomb potential and
without the  random phase approximation
was not studied previously,
see the discussion in 
\ci{BL2005} and in the introductions of \ci{CLL2013,CS2012}.

In \ci{CLL2013} 
E. Canc\`es, S. Lahbabi, and M. Lewin have considered
 the random reduced HF model of crystal  when 
the ions charge density and the electron density matrix are random processes,
and the action of the lattice translations on the probability space is ergodic.
The authors obtain suitable generalizations of the Hoffmann-Ostenhof 
and Lieb-Thirring inequalities  for ergodic density matrices, 
and
construct a random potential which is a solution  to 
 the Poisson equation 
with the corresponding stationary stochastic  charge density. 
The main result is the  coincidence of this model with the thermodynamic limit in  
the case of the short range Yukawa interaction.

In \ci{LS2014-1}, Lewin and Sabin have established the well-posedness for the 
reduced von Neumann equation, describing the Fermi gas,
 with density matrices of infinite trace 
and pair-wise interaction potentials $w\in L^1(\R^3)$. Moreover, the authors  
prove the asymptotic stability of translation-invariant stationary states 
for 2D Fermi gas \ci{LS2014-2}.
\smallskip

 The paper is organized as follows.
 In Section 2 we eliminate the potential and reduce the dynamics  to the integral equation.
In Sections 3 
we prove the well-posedness.
In Section 4  we prove 
the stability of the solitary manifold $\cS$ establishing  
the lower estimate for the energy. 
In Appendices we 
prove the conservation
of the energy and charge, 
describe all ground states and give some examples.
\medskip\\
{\bf Acknowledgments} The authors are grateful to Herbert Spohn for helpful 
discussions and remarks.


\setcounter{equation}{0}
\section{Reduction to the integral equation}
The operator $G:=(-\De)^{-1}$   is well defined in the Fourier series:
\begin{equation}\la{fs}
\rho(x)=\sum_{\xi\in{\Gamma^*_N}}\hat\rho(\xi)e^{i\xi x},
\qquad G\rho:=\sum_{\xi\in{\Gamma^*_N}\setminus 0}\fr{\hat\rho(\xi)}{\xi^2}e^{i\xi x}.
\end{equation}
 The Poisson equation (\ref{LPS2})
implies that $\hat\rho(0,t)=\ds\int\rho(x,t)\,dx=0$, which is  equivalent to (\re{rQ}). 
 Hence,
$\Phi(\cdot,t)=G\rho(\cdot,t)$ up to an additive constant $C(t)$ which can be compensated by a gauge transform 
$\psi(x,t)\mapsto\psi(x,t)\exp(ie\ds\int_0^t C(s)ds)$. 
The system (\re{HSi}) 
can be written as 
\begin{equation}\la{HS}
\dot X(t)=JE'(X(t)),
\qquad X(t):=(\psi_1(t)+i\psi_2(t),  q(t),p(t)),
\end{equation}
where 
\begin{equation}\la{HS2}
J=\left(
\begin{array}{rrr}
-i/2 &0&0\\
0   &0&1\\
0   &-1&0
\end{array}
\right).
\end{equation}
We will use the following function spaces with $s=0,\pm 1$.
Let us define the Sobolev space $H^s({T_N})$ as real Hilbert spaces of complex-valued functions
with the scalar product 
\begin{equation}\la{sp}
(\psi,\vp)_s:=\rRe\int_{T_N}\sum_{|\al|\le s} \pa^\al\psi(x)\pa^\al\ov\vp(x)dx,\qquad s=0,1.
\end{equation}
By definition, $H^{-1}({T_N})$ is the real dual space to $H^1({T_N})$
which will be identified with distributions by means of the scalar product in $H^0({T_N})$.

\bd
i) $\cW^s$ denotes the  real Hilbert space 
$H^s({T_N})\oplus\R^{3\ov N}\oplus \R^{3\ov N}$ for $s=0,\pm1$.
\medskip\\
ii) $\cV^s:=
H^s({T_N})\times[{T_N}]^{\ov N}\times \R^{3\ov N}$ is 
the Hilbert manifold endowed with the  metric 
\begin{equation}\la{dVs}
d_{\cV^s}(X,X'):=\Vert\psi-\psi'\Vert_{H^s({T_N})}+|q-q'|+|p-p'|,\qquad X=(\psi,q,p),
\quad
 X'=(\psi',q',p')
\end{equation}
and with the `quasinorm'
\begin{equation}\la{cVs}
|X|_{\cV^s}:=\Vert\psi\Vert_{H^s({T_N})}+|p|,\qquad X=(\psi,q,p).
\end{equation}

\ed

The linear space $\cW^s$ is  isomorphic to the tangent space to 
the Hilbert manifold $\cV^s$ at each point $X\in\cV^s$. 
We will write $\cX:=\cV^0$, $\cV:=\cV^1$, $\cW:=\cW^1$,
and $(\cdot,\cdot)_0=(\cdot,\cdot)$, which agrees with the definition of  
the scalar product on the real Hilbert space $L^2(T_N)$. In particular,
\begin{equation}\la{1i}
(1,i)=0.
\end{equation}
Denote 
by the brackets
$\langle\cdot,\cdot\rangle$
the scalar product in $\cX$ and also the duality between 
$\cW^{-1}$ and $\cW^1$:
\begin{equation}\la{dWW}
\langle Y,Y'\rangle:=(\vp,\vp')+\vka\vka'+\pi\pi',\qquad Y=(\vp,\vka,\pi), 
\quad Y'=(\vp',\vka',\pi').
\end{equation}
The total electronic charge  is defined (up to a factor) by
\begin{equation}\la{Q}
Q(X):= \int|\psi(x)|^2dx,\qquad X=(\psi,q,p)\in\cV.
\end{equation}
Obviously,
\begin{equation}\la{EQV}
|X|_\cV^2\le C[E(X)+Q(X)],\qquad X\in\cV,
\end{equation}
The system 
(\re{HS})
is a  nonlinear infinite-dimensional perturbation of the free Schr\"odinger equation.
We will prove  that a 
solution $X\in C(\R,\cV)$ exists and is unique for any initial state $X(0)\in\cV$, and the
energy and the electronic charge are conserved,
\begin{equation}\la{EQ}
E(X(t))=E(X(0)),\quad Q(X(t))=Q(X(0)),\qquad t\in\R.
\end{equation}
The energy (\re{Hfor}) and the charge are well defined and continuous 
on $\cV$ in the metric $d_\cV$ by the estimate (\re{Th}) below.
The charge conservation holds 
by the Noether theory \ci{A, GSS87, KQ} due to the $U(1)$-invariance of the Hamilton functional:
\begin{equation}\la{U1}
E(e^{i\al }\psi,q,p)=E(\psi,q,p),\qquad (\psi,q,p)\in\cV,\quad \al \in\R.
\end{equation}
We rewrite the system (\ref{HS}) in the integral  form
\begin{equation}\la{LPSi}
\left\{\begin{array}{lll}
\psi(t)&=&e^{-iH_0t} \psi(0)+ie\ds\int_0^t e^{-iH_0(t-s)} [ \Phi(s)\psi(s) ]ds,\\
q(n,t)&=&q(n,0)+\frac 1M\ds\int_0^t p(n,s)ds\mod N\Z^3,\\
p(n,t)&=&p(n,0)-\ds\int_0^t (\nabla \Phi(s),\sigma(\cdot-n-q(n,s))) ds,
\end{array}\right|
\end{equation}
where $H_0:=-\fr12\De$ and $\Phi(s):=G\rho(s)$.
In the vector form (\ref{LPSi}) reads
\begin{equation}\la{LPSiv}
X(t)=e^{-At}X(0)+\int_0^t  e^{-A(t-s)} N(X(s)) ds\mod \left(\begin{array}{c}0\\ N\Z^3\\0\end{array}\right).
\end{equation}
Here
\begin{equation}\la{HN}
A=\left(\begin{array}{ccc}
iH_0 & 0 & 0\\
0&0&0\\
0&0&0
\end{array}\right),\quad
N(X)=(ie \Phi\psi ~, p,~f),\quad 
f(n):=-(\nabla \Phi,\sigma(\cdot-n-q(n))),\quad\Phi:=G\rho,
\end{equation}
where $\rho$ is defined by (\re{Hfor2}).

\setcounter{equation}{0}
\section{Global dynamics}\la{Gd}
In this section we prove the well-posedness of the dynamics.

\bt\label{TLWP1}(Global well-posedness).
Let   (\re{ro+}) hold and $X(0)\in\cV$. Then 
\medskip\\
i) 
 Equation (\ref{HS}) has  a unique  solution $X\in C(\R,{\cal V})$,
 and the maps $U(t):X(0)\mapsto X(t)$ are continuous in $\cV$ for $t\in\R$.
 \medskip\\
ii) The conservation laws (\re{EQ}) hold.
\medskip\\
iii) $X$ is the solution to (\ref{LPS1})\,-\,(\ref{LPS3}) if 
\begin{equation}\la{rQ2}
Q(X(0))=Z\ov N^3.
\end{equation}
\et

First,  let us  prove the local well-posedness.

\bp\label{TLWP}(Local well-posedness).
Let   (\re{ro+}) hold and $|X(0)|_\cV\le R$. Then 
there exists $\tau=\tau(R)>0$ such that
  equation (\ref{HS}) has  a unique  solution $X\in C([-\tau,\tau],{\cal V})$,
 and the maps $U(t):X(0)\mapsto X(t)$ are continuous in $\cV$ for $t\in [-\tau,\tau]$.

 \ep
In the next two lemmas
we prove
the boundedness and the local Lipschitz continuity of the nonlinearity $N:\cV\to\cW$.
With this proviso Proposition \re{TLWP} follows from the integral form 
(\re{LPSiv}) of the equation (\ref{HS})
 by the contraction mapping principle, since {\color{red} $e^{-At}$} is an isometry of $\cW$.
\begin{lemma}\label{p1}
For any $R>0$ and $|X|_\cV\le R$
\begin{equation}\label{bN}
\Vert N(X)\Vert_\cW\le C(R)
\end{equation}
\end{lemma}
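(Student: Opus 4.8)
The plan is to estimate separately the three components of $N(X)=(ie\Phi\psi,\,p,\,f)$ in the three factors of $\cW=H^1(T_N)\oplus\R^{3\ov N}\oplus\R^{3\ov N}$, where $\Phi=G\rho$ and $\rho$ is given by (\re{Hfor2}). The middle component is trivial: $|p|\le|X|_\cV\le R$ directly from the quasinorm (\re{cVs}). Note that the displacements $q(n)$ do not enter (\re{cVs}) at all, and enter $\rho$ and $f$ only through the torus-translations $\si(\cdot-n-q(n))$; since every norm used below is $L^2$-based, hence translation invariant on $T_N$, the variable $q$ never obstructs any estimate. Thus the whole content is the control of $\Phi$ and then of the two remaining components $ie\Phi\psi\in H^1(T_N)$ and $f\in\R^{3\ov N}$.

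First I would bound $\rho$ and $\Phi$. Splitting $\rho$ into ionic and electronic parts, the ionic part is a sum of $\ov N=N^3$ torus-translates of $\si$, so $\Vert\sum_n\si(\cdot-n-q(n))\Vert_{L^2}\le\ov N\Vert\si\Vert_{L^2}$, while the electronic part satisfies $\Vert e|\psi|^2\Vert_{L^2}=e\Vert\psi\Vert_{L^4}^2\le C\Vert\psi\Vert_{H^1}^2\le CR^2$ by the Sobolev embedding $H^1(T_N)\hookrightarrow L^4(T_N)$ in dimension three. Hence $\Vert\rho\Vert_{L^2}\le C(R)$. The key structural point is that on the finite torus $G=(-\De)^{-1}$, with the zero mode removed as in (\re{fs}), maps $L^2$ boundedly into $H^2$: the nonzero frequencies obey $|\xi|\ge 2\pi/N$, so the multiplier $|\xi|^{-2}$ is bounded and $\Vert\Phi\Vert_{H^2}\le C(N)\Vert\rho\Vert_{L^2}\le C(R)$. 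The three-dimensional embeddings then give $\Vert\Phi\Vert_{L^\infty}\le C\Vert\Phi\Vert_{H^2}$ and $\na\Phi\in H^1\hookrightarrow L^6$, so that $\Vert\Phi\Vert_{L^\infty}$, $\Vert\na\Phi\Vert_{L^6}$ and $\Vert\na\Phi\Vert_{L^2}$ are all $\le C(R)$.

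The component $f$ is then immediate: by (\re{HN}), Cauchy--Schwarz and translation invariance, $|f(n)|\le\Vert\na\Phi\Vert_{L^2}\Vert\si\Vert_{L^2}$, whence $|f|\le\ov N^{1/2}\Vert\si\Vert_{L^2}\Vert\na\Phi\Vert_{L^2}\le C(R)$.

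The main work, and the point I expect to be the chief obstacle, is the $H^1$ estimate of the product $\Phi\psi$, since one derivative can land on either factor and in three dimensions neither $\na\Phi$ nor $\psi$ lies in $L^\infty$. By the Leibniz rule $\na(\Phi\psi)=(\na\Phi)\psi+\Phi\na\psi$, so I would bound $\Vert\Phi\psi\Vert_{L^2}\le\Vert\Phi\Vert_{L^\infty}\Vert\psi\Vert_{L^2}$ and $\Vert\Phi\na\psi\Vert_{L^2}\le\Vert\Phi\Vert_{L^\infty}\Vert\na\psi\Vert_{L^2}$ using the $L^\infty$ bound on $\Phi$, and the borderline term by H\"older as $\Vert(\na\Phi)\psi\Vert_{L^2}\le\Vert\na\Phi\Vert_{L^6}\Vert\psi\Vert_{L^3}$, controlled through $H^1(T_N)\hookrightarrow L^6(T_N)\hookrightarrow L^3(T_N)$ for both factors. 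Collecting the three estimates gives $\Vert e\Phi\psi\Vert_{H^1}\le C(R)$, and together with the bounds on $|p|$ and $|f|$ this yields (\re{bN}). All constants may depend on the fixed $N$ and on $\si$, which is harmless here. Closing this borderline product estimate is precisely where the elliptic regularity gain $L^2\to H^2$ for $\Phi$ must be combined with the full strength of the 3D Sobolev embeddings.
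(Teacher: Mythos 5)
Your proof is correct, and it reaches the same three component estimates as the paper ($|p|\le R$ trivially, $\Phi\psi$ in $H^1$, $f$ in $\ell^2$) but by a cleaner route on the key potential bound. The paper controls $\nabla\Phi$ by first differentiating $\rho$: it bounds $\Vert\nabla\rho^e\Vert_{L^{3/2}}\le C\Vert\psi\Vert_{L^6}\Vert\nabla\psi\Vert_{L^2}$ by H\"older and then passes to $\Vert\nabla\Phi\Vert_{L^3}\le C\Vert\nabla\rho\Vert_{L^{3/2}}$ via the Hausdorff--Young inequality on the Fourier series, using the summability of $|\xi|^{-6}$ over ${\Gamma^*_N}\setminus 0$; the borderline term in the Leibniz rule is then handled as $\Vert\psi\nabla\Phi\Vert_{L^2}\le\Vert\psi\Vert_{L^6}\Vert\nabla\Phi\Vert_{L^3}$. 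You instead exploit the spectral gap of $-\Delta$ on the torus (nonzero frequencies satisfy $|\xi|\ge 2\pi/N$) to get the full elliptic gain $G:L^2\to H^2$ with an $N$-dependent constant, so that $\nabla\Phi\in H^1\hookrightarrow L^6$ follows from $\rho\in L^2$ alone, with no need to differentiate $\rho$ or invoke Hausdorff--Young; your borderline term is then $\Vert\nabla\Phi\Vert_{L^6}\Vert\psi\Vert_{L^3}$. The two $L^\infty$ bounds on $\Phi$ are morally identical (Cauchy--Schwarz on the Fourier series versus $H^2\hookrightarrow L^\infty$). Your argument is shorter and avoids the $W^{1,3/2}$ machinery, at the price of constants that degrade as $N\to\infty$ (since the spectral gap closes); the paper's route keeps the dependence on $\nabla\rho$ explicit, which it reuses verbatim in the Lipschitz estimate of Lemma 3.3 and in the compactness argument for the Galerkin approximations (boundedness of $\rho_m$ in $W^{1,3/2}$), so you would need to redo that piece separately if you adopted your version throughout. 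Your bound $|f(n)|\le\Vert\nabla\Phi\Vert_{L^2}\Vert\sigma\Vert_{L^2}$ is also a legitimate variant of the paper's $\Vert\Phi\Vert_{C}\Vert\nabla\sigma\Vert_{L^1}$.
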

\begin{proof}
We split $\rho(x,t)$ as
  $\rho(x,t)=\rho^i(x,t)+\rho^e(x,t)$,
where
\[
\rho^i(x,t)=\sum_{n\in{\Gamma_N}}\sigma(x-n-q(n,t)),\qquad \rho^e(x,t)=-e|\psi(x,t)|^2.
\]
Applying the Cauchy-Schwarz inequality to the second formula (\ref{fs}), 
we obtain for $\Phi:=G\rho$,
\begin{equation}\label{Gpsi}
\Vert \Phi\Vert_{C({T_N})}\le C\Vert\hat \rho\Vert_{L^2({\Gamma^*_N})}=C\Vert \rho\Vert_{L^2({T_N})}
\le C(\Vert \rho^i\Vert_{L^2({T_N})}+e\Vert \psi\Vert_{L^4({T_N})}^2)\le
 C_1 (1+\Vert\psi\Vert_{H^1({T_N})}^2)
\end{equation}
since $H^1({T_N})\subset L^6({T_N})\subset L^4({T_N})$.
On the other hand, the H\"older inequality implies that
\begin{equation}\label{np}
\Vert\nabla \rho^e\Vert_{L^{3/2}({T_N})}\le e
\Vert\na|\psi|^2\Vert_{L^{3/2}({T_N})}
\le C_1\Vert\psi\Vert_{L^6({T_N})}\Vert\nabla\psi\Vert_{L^2({T_N})}
\le C_2\Vert\psi\Vert_{H^1({T_N})}^2.
\end{equation}
Therefore, we get by the Hausdorff-Young and the H\"older inequalities {\color{red} \ci{Her1}}
\begin{eqnarray}\nonumber
\Vert \nabla \Phi\Vert_{L^3({T_N})}\!\!&\le&\!\! C\Vert\widehat{\nabla \Phi}\Vert_{L^{3/2}({\Gamma^*_N})}
\le C_1\Vert \xi\hat \rho\Vert_{L^3({\Gamma^*_N})}
\Big[\sum_{\xi\in{\Gamma^*_N}\setminus 0}|\xi|^{-6}\Big]^{1/3}
\le C_2\Vert\nabla \rho\Vert_{L^{3/2}({T_N})}\\
\label{eq}
\!\!&\le&\!\! C_2(\Vert\nabla \rho^{i}\Vert_{L^{3/2}({T_N})}+\Vert\nabla \rho^{e}\Vert_{L^{3/2}({T_N})})
\le C_3(1+\Vert\psi\Vert_{H^1({T_N})}^2).
\end{eqnarray}
Now  (\ref{Gpsi})  and  (\ref{eq}) imply by the H\"older inequality 
\begin{eqnarray}\nonumber
\Vert\psi \Phi\Vert_{L^2({T_N})}&\le& \Vert \Phi\Vert_{C({T_N})}\cdot\Vert\psi \Vert_{L^2({T_N})}
\le C(1+\Vert\psi\Vert_{H^1({T_N})}^3)\\
\nonumber
\Vert \nabla\psi\Phi\Vert_{L^2({T_N})}&\le& \Vert \Phi\Vert_{C({T_N})}\Vert\nabla\psi \Vert_{L^2({T_N})}\le C(1+\Vert\psi\Vert_{H^1({T_N})}^3)\\
\Vert \psi\nabla \Phi \Vert_{L^2({T_N})}
&\le& C\Vert\psi\Vert_{L^6({T_N})}\cdot \Vert\nabla \Phi\Vert_{L^3({T_N})}
\le C_1(1+\Vert\psi\Vert_{H^1({T_N})}^3).\nonumber
\end{eqnarray}
Hence,
\begin{equation}\label{eq2}
\Vert\Phi\psi \Vert_{H^1({T_N})}\le C(1+\Vert\psi\Vert_{H^1({T_N})}^3).
\end{equation}
Finally, (\ref{eq}) and (\ref{ro+}) imply that
\begin{equation}\label{eq3}
|f(n)|\le \Vert \Phi\Vert_{C({T_N})}\Vert\na\si\Vert_{L^1({T_N})}
\le C(1+\Vert\psi\Vert_{H^1({T_N})}^2),\qquad n\in{\Gamma_N}.
\end{equation}
At last, (\ref{bN}) holds by  (\ref{eq2}) and  (\ref{eq3}).
\end{proof}
It remains
to prove  that the nonlinearity is  locally Lipschitz.

\begin{lemma}\label{p2}
For any $R>0$ and $X_1,X_2\in\cV$ 
\begin{equation}\label{lN}
\Vert N(X_1)-N(X_2)\Vert_\cW\le C'(R) d_\cV(X_1, X_2)\qquad{\rm if}\quad
| X_1|_\cV, |X_2|_\cV\le R.
\end{equation}
\end{lemma}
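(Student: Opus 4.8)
The plan is to prove the bound componentwise, exploiting that $N$ is at most cubic in $\psi$ and smooth in $q$, so that every difference telescopes into a product of one increment factor and one bounded factor. Write $X_j=(\psi_j,q_j,p_j)$, $j=1,2$, and set $\Phi_j:=G\rho_j$ with $\rho_j$ given by (\re{Hfor2}); then by (\re{HN}),
\[
N(X_1)-N(X_2)=\big(ie(\Phi_1\psi_1-\Phi_2\psi_2),\ p_1-p_2,\ f_1-f_2\big).
\]
The middle component contributes exactly $|p_1-p_2|\le d_\cV(X_1,X_2)$ to $\Vert\cdot\Vert_\cW$ by (\re{dVs}), so it is harmless. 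It therefore remains to bound the $H^1(T_N)$-norm of the first component and the Euclidean norm of the third.

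The core step is to control the difference of the densities and potentials. Decompose $\rho_j=\rho_j^i+\rho_j^e$ as in Lemma \re{p1}, and set $\delta\rho:=\rho_1-\rho_2$. For the electronic part I use the identity $|\psi_1|^2-|\psi_2|^2=(\psi_1-\psi_2)\ov\psi_1+\psi_2\ov{(\psi_1-\psi_2)}$ together with $H^1(T_N)\subset L^6(T_N)\subset L^4(T_N)$ exactly as in (\re{Gpsi}) and (\re{np}), which gives $\Vert\delta\rho^e\Vert_{L^2}\le C(R)\Vert\psi_1-\psi_2\Vert_{H^1}$ and $\Vert\na\delta\rho^e\Vert_{L^{3/2}}\le C(R)\Vert\psi_1-\psi_2\Vert_{H^1}$. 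For the ionic part, $\delta\rho^i=\sum_n[\si(\cdot-n-q_1(n))-\si(\cdot-n-q_2(n))]$; since $\si\in C^2(T_N)$ by (\re{ro+}), translation is Lipschitz in $L^2$ and, crucially, $\na\si$ is Lipschitz in $L^{3/2}$, so that $\Vert\delta\rho^i\Vert_{L^2}+\Vert\na\delta\rho^i\Vert_{L^{3/2}}\le C(N)\Vert\na^2\si\Vert\,|q_1-q_2|$. Feeding these into the Fourier estimates (\re{Gpsi}) and (\re{eq}) applied to $\Phi_1-\Phi_2=G\delta\rho$ yields
\[
\Vert\Phi_1-\Phi_2\Vert_{C(T_N)}+\Vert\na(\Phi_1-\Phi_2)\Vert_{L^3(T_N)}\le C(R)\,d_\cV(X_1,X_2).
\]

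Then I assemble the two remaining components. For the first, I split $\Phi_1\psi_1-\Phi_2\psi_2=(\Phi_1-\Phi_2)\psi_1+\Phi_2(\psi_1-\psi_2)$ and apply to each summand the same three Hölder estimates that produced (\re{eq2}) in Lemma \re{p1}, now using the potential bound above together with $\Vert\psi_j\Vert_{H^1}\le R$; this gives $\Vert\Phi_1\psi_1-\Phi_2\psi_2\Vert_{H^1}\le C(R)d_\cV$. For the third, I integrate by parts to write $f(n)=(\Phi,\na\si(\cdot-n-q(n)))$ and split $f_1(n)-f_2(n)=(\Phi_1-\Phi_2,\na\si(\cdot-n-q_1(n)))+(\Phi_2,\na\si(\cdot-n-q_1(n))-\na\si(\cdot-n-q_2(n)))$. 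The first term is bounded by $\Vert\Phi_1-\Phi_2\Vert_{C}\Vert\na\si\Vert_{L^1}$ and the second, using $\si\in C^2$ so that translations of $\na\si$ are $L^1$-Lipschitz, by $\Vert\Phi_2\Vert_C\Vert\na^2\si\Vert_{L^1}|q_1(n)-q_2(n)|$. Summing the squares over the finite lattice ${\Gamma_N}$ then yields $|f_1-f_2|\le C(R)d_\cV$, which completes (\re{lN}).

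The \emph{genuinely delicate point} is the ionic contribution. Unlike the boundedness Lemma \re{p1}, where one needs only $\na\si\in L^{3/2}\cap L^1$, the Lipschitz estimate forces differentiating the map $q\mapsto\si(\cdot-n-q)$ once more, so the full hypothesis $\si\in C^2(T_N)$ in (\re{ro+}) is used precisely here, to dominate $\na\si(\cdot-a)-\na\si(\cdot-b)$ in $L^{3/2}$ and in $L^1$ by $|a-b|$. One must also track that the constants may depend on $N$ through $\ov N=N^3$, which is harmless since the crystal is finite, and verify that the Hölder and Sobolev exponents match those of (\re{Gpsi})--(\re{eq}) so that the electronic terms reduce cleanly to one increment factor times one $R$-bounded factor. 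No analytic ingredient beyond Lemma \re{p1} is required: the nonlinearity is bilinear/trilinear in structure, and the difference is obtained purely by telescoping.
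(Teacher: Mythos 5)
Your proof is correct and follows essentially the same route as the paper's: the same splitting $\Phi_1\psi_1-\Phi_2\psi_2=(\Phi_1-\Phi_2)\psi_1+\Phi_2(\psi_1-\psi_2)$, the same reuse of the estimates \eqref{Gpsi}--\eqref{eq} applied to $\delta\rho=\rho_1-\rho_2$, the same integration by parts and two-term splitting for $f_1-f_2$, and the same use of the $C^2$-regularity of $\si$ to get Lipschitz dependence on the ion positions. Your remarks on where $\si\in C^2$ is genuinely needed and on the $N$-dependence of constants are accurate but add nothing beyond the paper's argument.
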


\begin{proof}
Writing  $X_k=(\psi_k,q_k,p_k)$ and $\Phi_k=G\rho_k$, we obtain that
\begin{equation}\label{NN1}
\Vert\Phi_1\psi_1-\Phi_2\psi_2\Vert_{H^1({T_N})}
\le
\Vert (\Phi_1-\Phi_2)\psi_1\Vert_{H^1({T_N})}
+\Vert \Phi_2(\psi_1-\psi_2)\Vert_{H^1({T_N})}.
\end{equation}
Similarly to (\ref{Gpsi})\,-\,(\ref{eq}) we obtain
\begin{eqnarray}\nonumber
\Vert \Phi_2(\psi_1-\psi_2)\Vert_{H^1({T_N})}&\le&
\Vert \Phi_2\Vert_{C({T_N})}
\Vert\psi_1-\psi_2\Vert_{H^1({T_N})}+\Vert \nabla \Phi_2\Vert_{L^3({T_N})}
\Vert\psi_1-\psi_2\Vert_{L^6({T_N})}\\
\label{NN2}
&\le&C(1+R^2)\Vert\psi_1-\psi_2\Vert_{H^1({T_N})}\le C(R)d_\cV(X_1,X_2)
\end{eqnarray}
Further, similarly to (\ref{np}),
\begin{equation}\label{np2}
\Vert\nabla (\rho^e_1-\rho^e_2)\Vert_{L^{3/2}({T_N})}
\le C\Vert\psi_1-\psi_2\Vert_{H^1({T_N})}[\Vert\psi_1\Vert_{H^1({T_N})}+\Vert\psi_2\Vert_{H^1({T_N})}].
\end{equation}
Moreover, $|\si(x)-\si(x-a)|\le C|a|$,
where $|a|:=\min_{r\in a} |r|$ for $a\in T_N$.
Hence, similarly to (\ref{eq}),
\begin{align*}
\Vert(\Phi_1-\Phi_2)\psi_1\Vert_{H^1({T_N})}\le
\Vert \Phi_1-\Phi_2\Vert_{C({T_N})}
\Vert\psi_1\Vert_{H^1({T_N})}+
\Vert \nabla (\Phi_1-\Phi_2)\Vert_{L^3({T_N})}
\Vert\psi_1\Vert_{L^6({T_N})}
\\
\le 
C R\Big[\Vert \rho_1^i-\rho_2^i\Vert_{L^2({T_N})}+\Vert \rho_1^e-\rho_2^e\Vert_{L^2({T_N})}
+\Vert\nabla(\rho_1^i-\rho_2^i)\Vert_{L^{3/2}({T_N})}+\Vert\nabla(\rho_1^e-\rho_2^e)\Vert_{L^{3/2}({T_N})}\Big]
\\
\le C_1 R(|q_1-q_2|+R\Vert \psi_1-\psi_2\Vert_{H^1({T_N})})
\le C(R)d_\cV(X_1,X_2).
\end{align*}
Now (\re{NN1}) and  (\re{NN2}) give
\begin{equation}\la{Th}
\Vert\Phi_1\psi_1-\Phi_2\psi_2\Vert_{H^1({T_N})}\le C(R)
d_\cV(X_1,X_2).
\end{equation}
Similarly,
\begin{eqnarray}\nonumber
&&| (\nabla \Phi_1,\sigma(\cdot-n-q_1(n)))- (\nabla \Phi_2,\sigma(\cdot-n-q_2(n)))|\\
\nonumber
&&\le |(\nabla \Phi_1-\nabla\Phi_2,\sigma(\cdot-n-q_1(n)))|
+ |(\nabla \Phi_2,\sigma(\cdot-n-q_1(n))-\sigma(\cdot-n-q_2(n)))|\\
\nonumber
&&\le C(\Vert \Phi_1-\Phi_2\Vert_{C({T_N})}+\Vert \Phi_2\Vert_{C({T_N})}| q_1-q_2|\le C(R)
d_\cV(X_1,X_2).
\end{eqnarray}
This estimate and  (\re{Th}) imply (\re{lN}).
\end{proof}
Now Proposition \re{TLWP} follows from Lemmas \ref{p1} and \ref{p2}. 
\medskip\\
{\bf Proof of Theorem \re{TLWP1}.}
The local solution  $X\in C([-\tau,\tau],\cV)$  exists and is unique by Proposition \re{TLWP}.
On the other hand, the conservation laws (\ref{EQ}) (proved in Proposition \re{lgal} iii)) 
together with   (\re{EQV}) imply a priori bound
\begin{equation}\la{ab}
| X(t)|_\cV^2\le C[E(X(0))+ Q(X(0))],\qquad t\in [-\tau,\tau].
\end{equation}
Hence, the local solution admits an extension to the global one $X\in C(\R,\cV)$. 
Further, (\ref{rQ2}) implies that $Q(X(t))=Z\ov N^3$ for all $t\in\R$ by the charge conservation 
(\re{EQ}). Hence, (\ref{HS}) gives (\ref{LPS1})\,-\,(\ref{LPS3}).
\hfill$\Box$
\setcounter{equation}{0}
\section{The orbital stability of the ground state}
In this section  we expand the energy into the Taylor series and prove the orbital stability 
checking the  positivity of the energy Hessian.
\subsection{The Taylor expansion of the Hamilton functional}

We will deduce the lower estimate (\re{BLi}) using
 the  Taylor expansion of $E(S+Y)$ for 
$S=S_{\al,r}:=(\psi_\al,\ov r,0)\in \cS$ and $Y=(\vp,\vka,p)\in\cW$:
\begin{equation}\la{te} 
E(S+Y)=E(S)+\langle E'(S),Y\rangle +\fr12 \langle Y,E''(S)Y\rangle + R(S,Y)=
\fr12 \langle Y,E''(S)Y\rangle + R(S,Y)
\end{equation}
since $E(S)=0$ and $E'(S)=0$.
First,
we expand 
 the  charge density (\re{Hfor2})
corresponding to 
$S+Y=(\psi_\al+\vp,\ov r+\vka,p)$:
\begin{equation}\la{ro}
\rho(x)=\rho^{(0)}(x)+\rho^{(1)}(x)+\rho^{(2)}(x),\qquad x\in{T_N},
\end{equation}
where $\rho^{(0)}$ and  $\rho^{(1)}$ are respectively 
the  terms of zero and first  order in 
$Y$, while  $\rho^{(2)}$ is the remainder.
However, $\rho^{(0)}(x)$ is the total charge density of the ground state
which is identically zero by (\ref{sipi}) and (\re{roZ}):
\begin{equation}\la{ro0}
\rho^{(0)}(x)=\rho^i_0(x)-e|\psi_\al(x)|^2\equiv 0,\qquad x\in{T_N}.
\end{equation}
Thus, $\rho=\rho^{(1)}+\rho^{(2)}$.
Expanding (\re{Hfor2}) further, we obtain 
\begin{eqnarray}
\la{ro11}\rho^{(1)}(x)\!\!\!\!&\!\!\!\!=\!\!\!\!&\!\!
\si^{(1)}(x)-2e\psi_\al (x)\cdot\vp(x),\quad \si^{(1)}(x)=-\sum_{n\in{\Gamma_N}} \vka(n)\cdot\na\si(x-n-r),\quad
\\
\la{ro13}\rho^{(2)}(x)\!\!\!\!&\!\!=\!\!\!\!&\!\!\si^{(2)}(x)-e|\vp(x)|^2,~~ \si^{(2)}(x)=\fr12\sum_{n\in{\Gamma_N}}
\int_0^1\!\!(1\!-\!s)
[\vka(n)\cdot\na]^2\si(x-n-r-s\vka(n))ds.
\end{eqnarray}
Substituting  $\psi=\psi_\al +\vp$ and $\rho=\rho^{(1)}+\rho^{(2)}$  
into (\ref{Hfor}), we obtain that
the  quadratic part  of (\re{te}) reads 
\begin{equation}\la{B2}
\fr12\langle Y,E''(S) Y \rangle=\fr12\int_{{T_N}}|\na \vp(x)|^2dx+
\fr12 (\rho^{(1)},G\rho^{(1)})+K(p),~~~~~~ K(p):=\ds\sum_n\fr{p^2(n)}{2M}~~
\end{equation}
and the remainder equals
\begin{equation}\la{B3}
R(S,Y)=\fr 12(2\rho^{(1)}+\rho^{(2)},G\rho^{(2)}). 
\end{equation}

\subsection{The null space of the energy Hessian}

In this section we calculate the null space 
 \begin{equation}\la{KYd}
  \cK(S):=\Ker\, E''(S)\Big|_\cW,\qquad S\in \cS
  \end{equation}
 under the Wiener condition.

  \bl
  Let the Jellium and the Wiener conditions (\re{Wai}),  
(\re{W1}) hold and $S\in\cS$. Then
  \begin{equation}\la{KY}
  \cK(S)=\{(C,\ov s,0):~~ C\in\C,~~s\in\R^3 \},
  \end{equation}
  where $\ov s\in\R^{3\ov N}$ is defined similarly to (\re{gr2}): 
  $\ov s(n)\equiv s$.
  \el
  \begin{proof}
  All summands  of
the
energy (\re{B2}) are nonnegative. Hence,  this expression is zero if and only if
 all the summands vanish:
\begin{equation}\la{rb}
\vp(x)\equiv C,\quad
(\rho^{(1)}, G\rho^{(1)})=\Vert\sqrt{G}[\si^{(1)}-2e\psi_\al \cdot\vp]\Vert_{L^2({T_N})}^2
=0,\quad p=0.
\end{equation}
Note that $\sqrt{G}\psi_\al \cdot\vp=\sqrt{G}\psi_\al \cdot C=0$ since 
the operator $G$ annihilates the constant functions 
by (\re{fs}). Hence, (\re{rb}) implies that
\begin{equation}\la{rb2}
\sqrt{G}\si^{(1)}=0.
\end{equation}
On the other hand, (\re{ro11}) gives
in the Fourier transform 
\begin{equation}\la{B314}
\hat\si^{(1)}(\xi)=\hat\si(\xi)\xi\cdot\sum_{n\in{\Gamma_N}} ie^{i\xi (n+r)}\vka(n)
=i\hat\si(\xi)\xi\cdot e^{i\xi r}\hat \vka(\xi),\qquad\xi\in {\Gamma^*_N},
\end{equation}
where $\hat \vka(\xi):=\sum_{n\in{\Gamma_N}} ie^{i\xi n}\vka(n)$ is a $2\pi\Z^3$-periodic function on
${\Gamma^*_N}$.
Hence, definition (\re{fs}) and the Jellium condition (\re{Wai}) imply that
\begin{eqnarray}\la{B315}
0=\Vert\sqrt{G}\si^{(1)}\Vert_{L^2({T_N})}^2&=&
 N^{-3}\sum_{{\Gamma^*_N}\setminus {\Gamma^*_1}} |\hat\si(\xi)\fr{\xi\hat \vka(\xi)}{|\xi|}|^2
\nonumber\\
\nonumber\\
&=&N^{-3}\sum_{\theta\in\Pi^*_N\setminus {\Gamma^*_1}} 
\langle\hat \vka(\theta),
 \sum_{m\in\Z^3}\Big[\fr{\xi\otimes\xi}{|\xi|^2}|\hat\si(\xi)|^2\Big]_{\xi=\theta+2\pi m}\hat \vka(\theta)\rangle
 \nonumber\\
\nonumber\\
&=&N^{-3}\sum_{\theta\in\Pi^*_N\setminus {\Gamma^*_1}} 
\langle\hat \vka(\theta),
 \Si(\theta)
 \hat \vka(\theta)\rangle.
\end{eqnarray}
As a result, 
\begin{equation}\la{B316}
\hat \vka(\theta)=0,\qquad \theta\in \Pi^*_N\setminus {\Gamma^*_1}
\end{equation}
by the Wiener condition
(\re{W1}).   
On the other hand, $\hat \vka(0)\in\R^3$ remains arbitrary,
see Remark \re{rW} ii).
	Respectively, 
	$\vka=\ov s$ with 
	an arbitrary $s\in\R^3$.
\end{proof}
\br
{\rm 
The key point of the proof is 
the explicit calculation (\re{B314}) in the Fourier transform.
This calculation relies on the invariance of
the Hessian $E''(S)$ with respect to ${\Gamma_N}$-translations which is 
due to the periodicity of the ions arrangement of the ground state.
}
\er
\br ({\it Beyond the Wiener condition.})
{\rm
 If
the Wiener condition (\re{W1})  fails, the dimension of the space
\begin{equation}\la{V}
V:=\{v\in \R^{3\ov N}:~~ v(n)=\sum_{\theta\in\Pi^*_N\setminus{\Gamma^*_1}}
e^{-i\theta n} \hat v(\theta),
\qquad \hat v(\theta)\in\C^3,~~ \Si(\theta)\hat v(\theta) {\color{red} \equiv}\,  0\}
\end{equation}
is positive.
The above calculations show that in this case 
 \begin{equation}\la{KYg}
  \cK(S)=\{(C, \ov s+v,0):~~ C\in\C,~~s\in{T_N},~~v\in V \}.
  \end{equation}
 The subspace $V\subset \R^{3\ov N}$ is orthogonal to the $3D$ subspace 
$\{\ov s:s\in\R^3\}\subset \R^{3\ov N}$ by the Parseval theorem.
Hence,  $\dim \cK(S)=5+d$,  where $d:=\dim V>0$. 
Thus, $\dim \cK(S)>5$.
Under the Wiener condition $V=0$, and (\re{KYg}) coincides with (\re{KY}).
}
 \er

\subsection{The positivity  of the Hessian}
Denote by $N_S\cS$ the normal subspace to $\cS$ at a point $S$:
\begin{equation}\la{L0N}
N_S\cS:=\{Y\in\cW: \langle Y,\tau\rangle=0,~~\tau\in T_S\cS\},
\end{equation}
where $T_S\cS$ is the
tangent space to $\cS$ at the point {\color{red}$S$}  and $\langle\cdot,\cdot\rangle$
stands for the scalar product (\re{dWW}).

\bd\la{dM}
Denote by $\cM$ the Hilbert manifold 
\begin{equation}\la{cM}
\cM:=\{X\in\cV: Q(X)=Z N^3\}.
\end{equation}
\ed
Obviously, $\cS\subset\cM$, and a tangent space to $\cM$ at a point
$S=(\psi_\al,\ov r, 0)$
is given by 
 \begin{equation}\la{TSM}
T_S\cM=\{(\vp, \vka,\pi)\in\cW:\vp\bot\psi_\al,~~ \vka\in\R^{3\ov N},
~~\pi\in\R^{3\ov N} \},
\end{equation}
since $DQ(\psi_\al,\ov r,0)=(\psi_\al,0,0)$.
\bl
Let the Jellium condition (\re{Wai}) hold and $S=S_{\al ,r}\in \cS$. Then
the Wiener condition (\re{W1}) is necessary and sufficient for the positivity
of the Hessian $E''(S)$
in the orthogonal directions to $\cS$ on $\cM$, i.e., 
\begin{equation}\la{cM2}
E''(S)\Big|_{N_S\cS\cap T_S\cM}>0.
\end{equation}
\el
\begin{proof}
i) Sufficiency.
Differentiating $S_{\al,r}=(e^{i\al}\psi_0,\ov r,0)\in \cS$ in the parameters $\al \in [0,2\pi]$ and $r\in{T_N}$, 
we obtain 
\begin{equation}\la{tv}
T_S\cS=\{(iC\psi_\al ,\ov s,0): ~~C\in\R,~~s\in\R^3\}.
\end{equation}
Hence, (\re{KY})  implies that 
\begin{equation}\la{tv2}
K(S):=\cK(S)\cap N_S\cS=
\{(C\psi_\al ,0,0): ~~C\in\R\}
\end{equation}
by (\re{1i}) and (\re{dWW}).
Therefore,
	\begin{equation}\la{tv22}
	\cK(S)\cap  N_S\cS\cap  T_S\cM=K(S)\cap  T_S\cM=
	(0 ,0,0),
	\end{equation}
since the vector $(\psi_\al ,0,0)$
 is orthogonal to $T_S\cM$ by (\re{TSM}). Now (\re{cM2}) follows since 
$E''(S)\ge 0$ by (\re{B2}).
\medskip\\
ii)	Necessity.
If the Wiener condition (\re{W1}) fails,
the null space $\cK(S)$ is given by (\re{KYg}). Hence, 
(\re{tv}) implies that now
\begin{equation}\la{tv3}
K(S)=
\{(C\psi_\al ,v,0): ~~C\in\R, ~~ v\in V\}.
\end{equation}
However,  $(\psi_\al,\psi_\al)>0$.
Hence, (\re{TSM}) implies that $(\psi_\al ,v,0)\not\in T_S\cM$ 
and
the intersection
\begin{equation}\la{tv4}
K(S)\cap T_S \cM=
\{0,v,0): ~~ v\in V\}
\end{equation}
is the nontrivial subspace of the dimension  $d>0$. 
Thus, the Hessian $E''(S)$ vanishes on this nontrivial subspace
of  $N_S\cS\cap  T_S\cM$.
\end{proof}

\br\la{rS} {\rm The positivity of type (\re{cM2}) breaks down for the 
submanifold 
$\cS(r):=\{S_{\al ,r}:\al \in[0,2\pi]\}$ 
with a fixed $r\in{T_N}$ instead of  the solitary manifold $\cS$.
Indeed, in this case the corresponding tangent space  is smaller, 
\begin{equation}\la{tvr}
T_S\cS(r)=\{(iC\psi_\al ,0,0): ~~C\in\R\},
\end{equation}
and hence, the normal subspace $N_S\cS(r)$ is larger, containing all 
vectors $(0,\ov s,0)$
generating  the shifts of the torus. However, all these vectors also 
belong 
to the null space (\re{KY}) and to $T_S\cM$.
Respectively, the null space of the Hessian $E''(S)$ in $T_S\cM\cap N_S\cS(r)$ 
is  3-dimensional.
}
\er

\subsection{The orbital stability}
Here we prove our   main result.
\bt\la{tm}
Let the conditions (\re{Wai}),  (\re{W1}) and  (\re{ro+}) hold, 
and $\cS$ is the solitary 
manifold (\re{cS}). 
Then
for any 
$\ve>0$  there exists $\de=\de(\ve)>0$ such that for 
$X(0)\in\cM$ with
$d_\cV(X(0),\cS)<\de$ we have
\begin{equation}\la{m}
d_\cV(X(t),\cS)<\ve,\qquad t\in\R
\end{equation}
for the corresponding solution
$X(t)\in C(\R,\cV)$  to (\re{LPS1})\,-\,(\re{LPS3}).
\et
For the proof is suffices to check
the lower energy estimate (\re{BLi}): 
\begin{equation}\la{BL}
E(X)\ge \nu\,d^2_\cV(X,\cS)\quad{\rm if}\quad d_\cV(X,\cS)\le\de,\quad X\in\cM
\end{equation}
with some $\nu,\de>0$. 
This estimate implies Theorem \re{tm}, since the energy is conserved
along all trajectories.
First, we prove similar lower bound for the energy Hessian. 
\bl
Let all conditions of Theorem \re{tm}  hold. Then  for each $S\in \cS$
\begin{equation}\la{L02}
 \langle Y, E''(S)Y\rangle > \nu\Vert Y\Vert_\cW^2,  \qquad Y\in  N_S\cS \cap T_S\cM,
\end{equation}
where $\nu>0$.
\el
\begin{proof}
It suffices to prove (\ref{L02}) for $S=(\psi_0,0,0)$. Note that $E''(S)$ is not complex linear due to the integral 
in (\re{Hfor}). Hence, we express the action of $E''(S)$ in $\psi_1(x):=\rRe\psi(x)$ and $\psi_1(x):=\rIm\psi(x)$:
by the formula (1.15) of \ci{KKpl2015},
\begin{equation}\la{E''}
E''(S)Y=\left(\begin{array}{cccl}
 2H_0+4e^2\psi_0 G\psi_0 & 0 & 2L & 0
 \medskip\\
 0 & 2H_0  &0 & 0\medskip\\
 2L^{\5*}  &    0  &   T    & 0  \\
      0      &    0            &   0    &  M^{-1} \\
\end{array}\right)Y\qquad{\rm for}\quad
Y=\left(\begin{array}{c}\psi_1 \\ \psi_2 \\ q \\ p \end{array}\right),
\end{equation}
where $H_0:=-\fr12\De$ as in (\re{LPSi}), and $\psi_0$ denotes the operator of  multiplication
by the real function $\psi_0(x)\equiv\sqrt{Z}$. The operator $L$  corresponds to the matrix
\begin{equation}\la{S}
 L(x,n):=e\psi_0(x)G\na\si(x-n):~~ ~~x\in\R^3,~n\in\Gamma_N
\end{equation}
by formula (3.3) of \ci{KKpl2015} and  $T$ corresponds to the real matrix with the entries
\begin{equation}\la{T}
T(n-n'):=-\ds\langle  G\na\otimes\na\si(x-n'),  \si(x-n) \rangle,\quad n, n'\in\Gamma_N
\end{equation}
by formula (3.4) of \ci{KKpl2015} since the corresponding potential  $\Phi_0=0$.
Thus, $E''(S)$ is a finite-rank perturbation of the operator with the discrete spectrum on the torus ${T_N}$. 
Moreover,  (\re{cM2}) implies that the minimal eigenvalue of $E''(S)$ is  positive.
Therefore, (\re{L02}) follows.
\end{proof}
\medskip

The positivity (\re{L02}) implies
the lower energy estimate (\re{BL}),  since the 
higher-order terms in (\re{te}) 
are negligible by the following lemma.

\bl\la{lre}
Let $\si(x)$ satisfy  (\re{ro+}).
Then the  remainder (\re{B3}) admits the estimate
\begin{equation}\la{B31}
|R(S,Y)|\le C
\Vert Y\Vert_\cW^3\quad\,\,\,{\rm for}\quad  \,\,\,\Vert Y\Vert_\cW \le 1.
\end{equation}
\el
\begin{proof} 
It suffices to prove the 
estimates
\begin{equation}\la{B312}
\Vert\sqrt{G}\rho^{(1)}\Vert_{L^2({T_N})}\le C_1\Vert Y\Vert_\cW,\quad  
\Vert\sqrt{G}\rho^{(2)}\Vert_{L^2({T_N})}\le C_2\Vert Y\Vert_\cW^2
\quad   \,\,\,{\rm for}\,\,\, \quad    \Vert Y\Vert_\cW  \le 1.
\end{equation}
Then  (\re{B31}) will follow from (\re{B3}).
\medskip\\
i)
By (\re{ro11}) we have
for $Y=(\vp,\vka,p)$ 
\begin{equation}\la{B313}
\sqrt{G}\rho^{(1)}=\sqrt{G}\si^{(1)}-2e\sqrt{G}\psi_\al (x)\cdot\vp(x).
\end{equation}
The operator $\sqrt{G}$ is bounded in $L^2(\R^3)$ by the definition (\re{fs}). 
Hence,  
\begin{equation}\la{GP1}
\Vert\sqrt{G}\si^{(1)}\Vert_{L^2({T_N})}\le C |\vka|
\end{equation}
by (\re{ro11}). Applying to the second term the 
Cauchy-Schwarz and Hausdorff-Young inequalities,
we obtain 
\begin{equation}\la{GP2}
\Vert\sqrt{G}\psi_\al (x)\cdot\vp\Vert_{L^2({T_N})}\le 
C\Big[\sum_{\xi\in{\Gamma^*_N}}\fr{|\hat\vp(\xi)|^2}{|\xi|^2}\Big]^{1/2}\le 
C\Vert\hat\vp\Vert_{L^4({\Gamma^*_N})}\Big[\sum_{\xi\in{\Gamma^*_N}}|\xi|^{-4}\Big]^{1/2}
\le
C\Vert\vp\Vert_{L^{4/3}({T_N})}.
\end{equation}
Hence, the first inequality (\re{B312}) is proved.
\medskip\\
ii) Now we prove the second 
 inequality  (\re{B312}). By (\re{ro13}) we have
for $Y=(\vp,\vka,p)$ 
\begin{equation}\la{B317}
\sqrt{G}\rho^{(2)}(x)=\sqrt{G}\si^{(2)}(x)-e\sqrt{G}|\vp(x)|^2.
\end{equation}
Similarly to  (\re{GP1})
\begin{equation}\la{GP3}
\Vert\sqrt{G}\si^{(2)}\Vert_{L^2({T_N})}\le C|\vka|^2.
\end{equation}
 Finally, 
denoting $\beta(x):=|\vp(x)|^2$, we obtain similarly to (\re{GP2}) 
\begin{eqnarray}\la{fp}
\Vert\sqrt{G}|\vp(x)|^2\Vert_{L^2({T_N})}
&\le&
C\Big[\sum_{\xi\in{\Gamma^*_N}}\fr{|\hat \beta(\xi)|^2}{|\xi|^2}\Big]^{1/2}\le 
C\Vert \hat\beta\Vert_{L^4({\Gamma^*_N})}\Big[\sum_{\xi\in{\Gamma^*_N}}|\xi|^{-4}\Big]^{1/2}
\nonumber\\
&\le&
C_1\Vert \beta\Vert_{L^{4/3}({T_N})}
=
C_1\Vert \vp\Vert_{L^{8/3}({T_N})}^2
\le C_2\Vert\vp\Vert_{H^1({T_N})}^2
\end{eqnarray}
by the Sobolev embedding theorem {\color{red} \ci{Adams}}.
Now the lemma is proved.
\end{proof}

\appendix

\setcounter{section}{0}
\setcounter{equation}{0}
\protect\renewcommand{\thesection}{\Alph{section}}
\protect\renewcommand{\theequation}{\thesection.\arabic{equation}}
\protect\renewcommand{\thesubsection}{\thesection.\arabic{subsection}}
\protect\renewcommand{\thetheorem}{\Alph{section}.\arabic{theorem}}

\setcounter{equation}{0}
\section{Conservation laws}
We deduce the conservation  laws (\ref{EQ}) by the Galerkin approximations \ci{Lions}. 

\bd
i)$\cV_m$ with $m\in\N$  denotes finite dimensional  submanifold of $\cV$ formed by
\begin{equation}\la{Vm}
(\sum_{k\in{\Gamma^*_N}(m)} C_k e^{ikx},q,p),  \qquad q\in{T_N}^{\ov N}, \quad p\in\R^{3\ov N}.
\end{equation}
where ${\Gamma^*_N}(m):=\{k\in{\Gamma^*_N}:  k^2\le m\}$.
\medskip\\
ii) $\cW_m$ with $m\in\N$  denotes the finite dimensional linear subspace of $\cW$
spanned by 
\begin{equation}\la{Wm}
(\sum_{k\in{\Gamma^*_N}(m)} C_k e^{ikx},\vka,v),  \qquad \vka\in\R^{3\ov N}, \quad v\in\R^{3\ov N}.
\end{equation}
\ed
 Obviously, $\cV_1\subset\cV_2\subset...$, 
the union $\cup_m\cV_m$ is dense in $\cV$, and 
 $\cW_m$ are  invariant with respect 
to {\color{red}$A$} and $J$.
Let us denote by $P_m$
the orthogonal projector $\cX\to\cW_m$. This projector is also orthogonal in $\cW$.
Let us 
approximate the system 
(\re{HS}) by the finite dimensional Hamilton systems  on the manifold $\cV_m$,
\begin{equation}\la{gal}
\dot X_m(t)=JE_m'(X_m(t)),\qquad t\in\R,
\end{equation}
where $E_m:=E|_{\cV_m}$ and $X_m(t)=(\psi_m(t),q_m(t),p_m(t))\in C(\R,\cV_m)$.
The equation (\re{gal}) can be also  written as
\begin{equation}\la{gali}
\langle \dot X_m(t),Y\rangle=-\langle E'(X_m(t)),JY\rangle,\qquad Y\in\cW_m.
\end{equation}
This form of the equation (\re{gal}) holds since $E_m:=E|_{\cV_m}$ and $\cW_m$ is  invariant 
with respect to  $J$. Equivalently,
\begin{equation}\la{gali2}
\dot X_m(t)={\color{red}-A}\, X_m(t) + P_m N(X_m(t)).
\end{equation}

The Hamiltonian form guarantees the energy and charge conservation
 (\ref{EQ}):
\begin{equation}\la{EQ2}
E(X_m(t))=E(X_m(0)),\quad Q(X_m(t))=Q(X_m(0)),\qquad t\in\R.
\end{equation}
Indeed, the energy conservation holds by the Hamiltonian form  (\re{gal}), 
while the charge conservation holds
by
the Noether theory \ci{A,GSS87, KQ} due to the
$U(1)$-invariance of $E_m$, see (\re{U1}).

The equation (\re{gali2}) admits a unique local solution for every initial state
 $X_m(0)\in\cV_m$ since the right hand side 
is locally bounded and Lipschitz continuous. 
The global solutions exist by   (\re{EQV})
and
the energy and charge conservation (\re{EQ2}).
\medskip

Finally, we take any $X(0)\in\cV$ and choose a sequence 
\begin{equation}\la{s0}
X_m(0)\to X(0),\qquad m\to\infty,
\end{equation}
where the convergence holds in the metric of $\cV$.
 Therefore, 
\begin{equation}\la{EQm}
E(X_m(0))\to E(X(0)),\qquad Q(X_m(0))\to Q(X(0)).
\end{equation}
Hence, (\ref{EQ2}) and (\re{EQV}) imply the basic uniform bound 
\begin{equation}\la{Vb}
R:=\sup_{m\in\N}\,\,\sup_{t\in\R}| X_m(t)|_\cV <\infty.
\end{equation}
Therefore, 
(\ref{gali2}) and Lemma \re{p1} imply  the second basic uniform bound
\begin{equation}\la{Vb2}
\sup_{m\in\N}\,\sup_{t\in\R}\,\Vert \dot X_m(t)\Vert_{\cW^{-1}} <C(R),
\end{equation}
since  the operator 
${\color{red}A}:\cW\to\cW^{-1}$ is bounded, and the projector
$P_m$ is also a bounded operator in $\cW\subset \cW^{-1}$.
Hence, 
the Galerkin
approximations $X_m(t)$ are uniformly Lipschitz-continuous with values in $\cV^{-1}$:
\begin{equation}\la{ecg}
\sup_{m\in\N}\, d_{\cV^{-1}}(X_m(t),X_m(s))\le C(R)|t-s|,\qquad s,t\in\R.
\end{equation}
Let us show that  
the uniform estimates   (\ref{Vb}) and (\ref{ecg}) provide a compactness of the 
Galerkin approximations and the conservation laws. 
Let us recall that $\cX:=\cV^0$ and $\cV:=\cV^1$.
\bp\la{lgal} Let   (\re{ro+}) hold and $X(0)\in\cV$. Then 
\medskip\\
i) 
There exists
a subsequence $m'\to\infty$ such that
\begin{equation}\la{ss}
X_{m'}(t)\tocX X(t),\qquad m'\to\infty,\qquad t\in\R,
\end{equation}
where $X(\cdot)\in C(\R, \cX)$.
\medskip\\
ii) Every limit function $X(\cdot)$
is a solution to  (\re{LPSiv}), and $X(\cdot)\in C(\R,\cV)$.
\medskip\\
iii) The conservation laws (\re{EQ}) hold.
\ep
\begin{proof}
i) The convergence (\re{ss}) follows from 
 (\re{Vb}) and  (\re{Vb2}) 
by the Dubinsky
 `theorem on three spaces' \ci{Dub65}  (Theorem 5.1 of
\ci{Lions}). Namely, the embedding $\cV\subset\cX$ is compact by the Sobolev theorem 
{\color{red} \ci{Adams}},
and hence, (\re{ss}) holds by 
(\re{Vb})
for $t\in D$, where $D$ is a countable dense set. 
Finally, let us use the  interpolation inequality  and  (\re{Vb}), (\re{ecg}):
for any $\ve>0$
\begin{equation}\la{inti}
 d_\cX(X_m(t),X_m(s))\le\ve d_\cV(X_m(t),X_m(s))
+ C(\ve)  d_{\cV^{-1}}(X_m(t),X_m(s))\le 2\ve R+C(\ve,R)|t-s|.
\end{equation}
This inequality 
implies the 
equicontinuity of the Galerkin approximations with the values in $\cX$. Hence,
convergence 
(\re{ss}) holds for all $t\in\R$ since it holds for 
the dense set of $t\in D$. 
The same equicontinuity also implies  the continuity of the limit function $X\in C(\R, \cX)$.
\medskip\\
ii) 
Integrating  equation (\re{gali2}), we obtain 
\begin{equation}\la{gal2}
\int_0^t\langle \dot X_m(t),Y\rangle\,ds={\color{red}-}\int_0^t
\langle X_m(s),{\color{red} A}Y)\,ds + \int_0^t\langle N(X_m(s)),Y\rangle\,ds,
\qquad Y\in\cW_m,
\end{equation}
Below we will write $m$ instead of $m'$. 
To prove   (\re{LPSiv}) it suffices to check that  in the limit $m\to\infty$, we get
\begin{equation}\la{gal4}
\int_0^t\langle \dot X(t),Y\rangle\,ds
={\color{red}-}\int_0^t\langle X(s),{\color{red} A}Y\rangle\,ds + \int_0^t\langle N(X(s)),Y\rangle\,ds,
\qquad Y\in\cW_n,\qquad n\in\N.
\end{equation}
The convergence of  the left hand side and of the first term on the right hand side 
of (\re{gal2}) follow from (\re{ss}) and  (\re{s0}) since ${\color{red} A}Y\in\cW_m$.

It remains to consider the last integral of (\re{gal2}).
The integrand is uniformly bounded by (\re{Vb}) and Lemma \re{p1}. 
Hence, it suffices to check the pointwise convergence
\begin{equation}\la{Nm}
\langle N(X_m(t), Y\rangle-\!\!\!-\!\!\!\!\to \langle N(X(t), Y\rangle,\quad m\to\infty,\qquad Y\in\cW_n
\end{equation}
 for any $t\in\R$. Here $N(X_m(t))=(ie\Phi_m(t)\psi_m(t),p_m(t),f_m(t))$ according to 
the notations (\re{HN}), and $Y=(\vp,\vka,v)\in \cW_n$. Hence, (\re{Nm}) reads
 \begin{equation}\la{Nm2}
 ie(\Phi_m(t)\psi_m(t),\vp)+p_m(t)\vka+f_m(t)v\,\to\,  ie(\Phi(t)\psi(t),\vp)+p(t)\vka+f(t)v,\quad m\to\infty.
\end{equation}
 The convergence of $p_m(s)\vka$ follows from (\re{ss}) (with $m'=m$) .
 To prove the convergence of two remaining terms, we first show that
\begin{equation}\la{2rt}
\Phi_m(t):=G\rho_m \toLC \Phi(t):=G\rho,\quad m\to\infty.
\end{equation}
Indeed,  (\re{ss}) implies that 
\begin{equation}\la{qp}
\psi_m(t)\toLt\psi(t),\qquad q_m(t)\to q(t),\quad m\to\infty.
\end{equation}
	The sequence $\psi_m(t)$ is bounded in $H^1({T_N})$ by (\re{Vb}).
	Hence, $\psi(t)\in H^1({T_N})$ and 
	the sequence $\rho_m(t)$ is bounded in the Sobolev space $W^{1,3/2}({T_N})$ by (\re{np}).
	Therefore, the sequence $\rho_m(t)$ is precompact in $L^2({T_N})$ by  the Sobolev compactness theorem. 
	Hence,
	\begin{equation}\la{qp2}
	\rho_m\toLt\rho,\quad m\to\infty
	\end{equation}
	by (\re{qp}).
	Therefore, (\re{2rt}) holds since the operator $G:L^2({T_N})\to C({T_N})$ is continuous. 
	From (\re{2rt}) and (\re{qp}) it follows  that
	\begin{equation}\la{Nm3}
	\Phi_m(t)\psi_m(t)\toLt\5 \Phi(t)\psi(t),\quad f_m(t)\to f(t),\quad m\to\infty,
	\end{equation}
  which proves (\re{Nm2}). Now (\re{gal4}) is proved for $Y\in\cV_n$ with any $n\in\N$.
 Hence, $X(t)$ is a solution to (\re{HS}).  Finally,  
	$\Vert N(X(\cdot))\Vert_\cW$ is a bounded function  by
  (\re{Vb}) and Lemma \re{p1}.
  Hence, 
 (\re{LPSiv}) implies that  $X(\cdot)\in C(\R,\cV)$.
 \medskip\\
 iii) The conservation laws (\re{EQ2}) and the convergences (\re{s0}), (\re{ss}) imply that
 \begin{equation}\la{EQ3}
E(X(t))\le E(X(0)),\quad Q(X(t))\le Q(X(0)),\qquad t\in\R.
\end{equation}
The last inequality holds by the first convergence of (\re{qp}). The first inequality follows from the representation
\begin{equation}\la{EQ4}
E(X_m(t))=\fr12 \Vert\na\psi_m(t)\Vert_{L^2({T_N})}^2+\fr12 
\Vert \sqrt{G}\rho_m(t)\Vert_{L^2({T_N})}^2 +\sum_{n\in\Gamma_n}\fr{p_m^2(n,t)}{2M}.
\end{equation}
Namely, the last two terms on the right hand side converge by (\re{qp2}) and (\re{ss}). Moreover, 
the first term is bounded by (\re{Vb}). Hence,  the first convergence of  (\re{qp}) 
implies the weak convergence 
 \begin{equation}\la{EQ5}
\na\psi_{m}(t)\toLwt\na\psi(t)
 \end{equation}
 by the Banach theorem.
Now the first inequality of (\re{EQ3}) follows by the property of the weak convergence in the Hilbert space.
Finally, the opposite inequalities to (\re{EQ3}) are also true by the uniqueness of 
solutions $X(\cdot)\in C(\R,\cV)$, which is proved in Proposition \re{TLWP}.
\end{proof}

\setcounter{equation}{0}
\section{Jellium ground states}
We describe all 
 solutions to (\re{LPS1})\,-\,(\re{LPS3}) with minimal energy  (\re{Hfor}),
give some examples of ion densities illustrating the Jellium and 
the Wiener conditions, 
and show the existence of non-periodic ground states.

\subsection{Description of all ground states}

The following lemma gives the description of all ground states.  
\bl\la{Jgs}
Let the  Jellium condition (\re{Wai}) hold. Then 
all solutions 
to (\re{LPS1})\,-\,(\re{LPS3})
of minimal (zero) energy are $(\psi_\al,q^*,0)$ 
with $q^*\in{T_N}^{\ov N}$ satisfying 
the identity
\begin{equation}\la{sipiq}
	\sum_{n\in{\Gamma_N}}\si(x-q^*(n))\equiv eZ,\qquad x\in{T_N},
\end{equation}

\el
\begin{proof}
First,
let us note that the ${\Gamma_N}$-periodic solutions (\re{gr}) 
have the zero energy, and 
the identity (\re{sipiq}) holds for $q^*=\ov r$ by  (\re{sipi}).

Further, for any solution with zero energy  (\re{Hfor})
all summands on the right hand side of  (\re{Hfor}) vanish.
The first integral  vanishes only
for constant functions. Hence, the normalization condition (\re{rQ}) gives
  \begin{equation}\la{ppo}
  \psi(x,t)\equiv
\psi_{\al(t)}(x)\equiv e^{i\al(t)}\sqrt{Z},\qquad\al(t)\in \R.
\end{equation}
Then 
\begin{equation}\label{roZ2}
 \rho^e(x,t):=-e|\psi_{\al(t)}(x)|^2\equiv -eZ,\qquad x\in {T_N},\,\,\,\,t\in\R,
 \end{equation}
similarly to (\re{roZ}).
Further,
the second summand of (\re{Hfor}) vanishes only for $\rho(x,t)\equiv 0$
that is equivalent to (\ref{sipiq}) 
with $q(n,t)$ instead of $q^*(n)$
by (\re{roZ2}).
However, $\pa_t q(n,t)=p(n,t)\equiv 0$  for the zero energy  (\re{Hfor}).
Hence,
\begin{equation}\la{qnc}
q(n,t)\equiv q^*(n),\qquad t\in\R,
\end{equation}
where $q^*$ satisfies  (\re{sipiq}).
Moreover, $\Phi(x,t)\equiv 0$ 
 by the Poisson equation (\re{LPS2}) with $\rho(x,t)\equiv 0$.
Hence,   finally,
substituting  (\re{ppo}) into  (\re{LPS1}) 
with $\Phi(x,t)\equiv 0$,
we obtain that $\al(t)\equiv{\rm const}$.
 \end{proof}

This lemma implies that all  ${\Gamma_N}$-periodic ground states are given by
(\re{gr}).

 
 \subsection{Jellium and Wiener conditions. Examples}\la{sex}

The Wiener condition (\re{W1}) for the ground states (\re{gr}) holds 
under the generic assumption
(\re{W1s}).
On the other hand,  (\re{W1}) does not hold
  for the simplest Jellium model,
 when $\si(x)$ is the characteristic function
\begin{equation}\la{sic}
\si(x)=\si_1(x):=\left\{
\begin{array}{ll}
eZ,& x\in\Pi\\ 
0,& x\in{T_N}\setminus\Pi
\end{array}\right|{\color{red},}
\end{equation}
{\color{red} where $\Pi:=[-1/2,1/2]^3$}.
 Indeed, 
in this case the Fourier transform 
\begin{equation}\la{sJM}
\hat\sigma_1(\xi)=eZ\hat\chi_1(\xi_1)\hat\chi_1(\xi_2)\hat\chi_1(\xi_3);\qquad
\hat\chi_1(s)=\fr {2\sin s/2}s,\quad s\in\R\setminus 0,
\end{equation}
where 
$\chi_1(s)$ is the characteristic function of the interval 
{\color{red} $[-1/2,1/2]$}.
In this case we have
for $\theta= (0,\theta_2,\theta_3)$,
\begin{equation}\la{DK2}
\Si(\theta)= \sum_{m\in\Z^3:\,m_1=0}\Big[
 \fr{\xi\otimes\xi}{|\xi|^2}|\hat\si(\xi)|^2\Big]_{\xi=\theta+2\pi m},
\end{equation}
which is a degenerate matrix since $\xi_1=0$ in each summand. Hence, (\re{W1}) fails.
Similarly, the Wiener condition  fails for 
$\si_k(x)=eZ\chi_k(x_1)\chi_k(x_2)\chi_k(x_3)$ where 
$\chi_k={\color{red}\chi_1*...*\chi_1}$ ($k$ times)
 with $k=2,3,...$,
since in this case
\begin{equation}\la{sJM2}
\hat\sigma_k(\xi)=eZ\hat\chi_k(\xi_1)\hat\chi_k(\xi_2)\hat\chi_k(\xi_3);\qquad
\hat\chi_k(s)=\Big[\fr {2\sin s/2}s\Big]^k,\quad s\in\R\setminus 0.
\end{equation}

 
 \subsection{Non-periodic ground states}\la{snp}

It is easy to construct ground states   which are not $\Gamma_1$-periodic
in the case  of characteristic function (\re{sic}). Namely, the identity (\re{sipiq}) obviously holds 
for periodic arrangement  of ions (\re{gr2}). Now let us modify this periodic arrangement as follows:
\begin{equation}\la{mod}
q^*(n)=
(r_1,r_2,r_3+\tau(n_1,n_2)),\qquad n\in{\Gamma_N},
\end{equation}
where $\tau(n_1,n_2)$ is an arbitrary point of
the circle
$\R/N\R$. Now (\re{sipiq}) obviously holds for any arrangement of ions (\re{mod}).

Next lemma gives a more general spectral 
assumptions on $\si$ which provide ground states
with non-periodic ion arrangements. For example, let us assume that
\begin{equation}\la{spc}
\si(\xi)=0,\quad \xi_3\in 2\pi\Z\setminus 0, \qquad{\rm and}
\qquad\si(\xi_1,\xi_2,0)=0,\quad (\xi_1,\xi_2)\in 2\pi\Z^2\setminus 0.
\end{equation}
In particular, this holds for the densities (\re{sJM2}) with all $k=1,...$

\bl\la{lnonp}
Let $\si$ satisfy the spectral condition (\re{spc}).
Then there exist  ground states  which are not $\Gamma_1$-periodic.

\el
\begin{proof}
In the Fourier transform (\re{sipiq}) reads
\begin{equation}\la{sip4}
\int_{T_N} e^{i\xi x}\sum_{n\in{\Gamma_N}}\si(x-n-q^*(n))dx=
\hat\si(\xi)\sum_{n\in{\Gamma_N}} e^{i\xi(n+q(n))}=
\left\{
\begin{array}{rl}
eZ\ov N,&\xi=0\\
0,&\xi\in{\Gamma^*_N}\setminus 0.
\end{array}\right.
\end{equation}
These identities hold for any density $\si$ satisfying (\re{Wai})
if 
\begin{equation}\la{sip5}
\sum_{n\in{\Gamma_N}} e^{i\xi(n+q^*(n))}=0,\qquad \xi\in{\Gamma^*_N}\setminus{\Gamma^*_1}.
\end{equation}
In particular, $q^*=\ov r$
satisfies the system
(\re{sip5})  since then
\begin{equation}\la{sip6}
\sum_{n\in{\Gamma_N}} e^{i\xi(n+q^*(n))}=\sum_{n\in{\Gamma_N}} e^{i\xi(n+r)}=
e^{i\xi r}
\sum_{n\in{\Gamma_N}} e^{i\xi n}=0,\qquad \xi\in{\Gamma^*_N}\setminus{\Gamma^*_1}.
\end{equation}
Indeed, 
\begin{equation}\la{sip7}
\sum_{n\in{\Gamma_N}} e^{i\xi n}=\sum_{n\in{\Gamma_N}} e^{i(\xi_1 n_1+\xi_2 n_2+\xi_3 n_3)}
=\sum_{n_1=0}^{ N-1} e^{i\xi_1 n_1}
\sum_{n_2=0}^{ N-1} e^{i\xi_2 n_2}
\sum_{n_3=0}^{ N-1} e^{i\xi_3 n_3}
=0
\end{equation}
since at least one $\xi_k\not\in 2\pi\Z$ for $\xi\in{\Gamma^*_N}\setminus{\Gamma^*_1}$.
Now we modify $\ov r$ as follows:
\begin{equation}\la{modr}
q^*(n):=( a_1(n_1,n_2)),a_2(n_1,n_2), r_3),\qquad n\in{\Gamma_N},
\end{equation}
where $a_1(n_1,n_2))$ and $a_2(n_1,n_2)$ are {\it arbitrary points} of the circle $R/2\pi\Z$.
Then for $\xi\in{\Gamma^*_N}\setminus{\Gamma^*_1}$ we have
\begin{equation}\la{sip62}
\sum_{n\in{\Gamma_N}} e^{i\xi(n+q^*(n))}=\sum_{n_1,n_2=0}^{N-1}
e^{i(\xi_1(n_1+a_1(n_1,n_2)+\xi_2(n_2+a_2(n_1,n_2)}
\sum_{n_3=0}^{N-1} e^{i\xi_3 n_3}=0
\quad{\rm if}\quad \xi_3\not\in  2\pi\Z.
\end{equation}
Hence,
all identities (\re{sip4}) hold by (\re{spc}).
\end{proof}

\subsection{On the problem of periodicity}\la{spp}
In the non-periodic examples above 
the Wiener condition fails. We suppose that the Wiener condition provides
only periodic ground states, however this is an open problem. 
For densities $\si$ satisfying a more strong condition (\re{W1}), the  
identity (\re{sipiq}) is equivalent  to the system (\re{sip5}) by (\re{sip4}).
The system 
 (\re{sip5})
can be written as an `algebraic system'
\begin{equation}
\sum_{n\in{\Gamma_N}} w_1^{m_1}(n) w_2^{m_2}(n) w_3^{m_3}(n)=0,\qquad m\in\Z^3\setminus NZ^3.
\end{equation}
for 
\begin{equation}
w_j(n_1,n_2,n_3):=e^{i\ds\fr{2\pi}N [n_j+q^*_j(n)]},
\qquad n\in{\Gamma_N},\quad j=1,2,3.
\end{equation}
The ${\Gamma_N}$-periodicity of $q^*$ 
is equivalent to the fact that
 only solutions are 
\begin{equation}
w_j(n_1,n_2,n_3)=C_j \lam^{n_j},\qquad j=1,2,3,
\end{equation}
where 
$\lam:=e^{i\ds\fr{2\pi}N}$. For the corresponding 1D analog
\begin{equation}
\sum_{n_1=0}^{N-1} w_1^{m_1}(n_1)=0,\qquad m=1,...,N-1,
\end{equation}
the only solutions are $w_1(n_1):=C_1 \lam^{n_{\color{red} 1}}$ that follows easily 
from the 
Newton-Girard formulas guessed by Girard in 1629 
and rediscovered (without a proof) by Newton in 1666. The formulas were 
proved by Euler in 1747, see \ci{Euler1747}.



\begin{thebibliography}{99}

\bibitem{Adams} R.A. Adams, Sobolev Spaces,  Academic Press, NY, 1975. 

\bibitem{A} V. Arnold,  Mathematical Methods of Classical Mechanics,
Springer, New York, 1978. 



\bibitem{BLR} F. Bonetto, J. L. Lebowitz,   L. Rey-Bellet,            
  Fourier's law: a challenge to theorists, p. 128-150 in:
Fokas, A. (ed.) et al., Mathematical physics 2000. International congress, London, GB, 2000, Imperial College Press, London, 2000.
      


\bibitem{Born}
 M. Born, K. Huang, Dynamical Theory of Crystal Lattices,
  The Clarendon Press, Oxford University Press, New York, 1998.




\bibitem{CLL2013}
E. Canc\`es, S. Lahbabi, M. Lewin,
Mean-field models for disordered crystals,
{\em J. Math. Pures Appl. (9)} {\bf 100} (2013), no. 2, 241-274. 

\bibitem{CS2012}
E. Canc\`es, G. Stoltz,
A mathematical formulation of the random phase approximation
for crystals,
{\em Ann. I. H. Poincar\'e - AN} {\bf 29} (2012), 887-925. 




\bibitem{CBL1998}
L. Catto, C. Le Bris, P.-L. Lions,
  The Mathematical Theory of Thermodynamic Limits: 
Thomas-Fermi Type Models,
Clarendon Press, Oxford, 1998.

\bibitem{CBL2001}
L. Catto, C. Le Bris, P.-L. Lions,
 On the thermodynamic limit for Hartree-Fock type models,
{\em Ann. Inst. Henri Poincar\'e, Anal. Non Lin\'eaire}
{\bf 18} (2001), no. 6, 687-760.

\bibitem{CBL2002}
L. Catto, C. Le Bris, P.-L. Lions,
On some periodic Hartree-type models for crystals,
{\em Ann. Inst. Henri Poincar\'e, Anal. Non Lin\'eaire}
{\bf 19} (2002), no. 2, 143-190.
                              
\bibitem{Dub65}
Yu. A.
Dubinsky, Weak convergence in non-linear elliptic and parabolic
equations, {\em Mat. USSR Sb.} {\bf 67} {\color{red} (109)} (1965), 609-642 (in Russian).

\bibitem{D1967}
F.J. Dyson, Ground-state energy of a finite system of charged particles,
 J. Math.
Phys. 8, 1538-1545 (1967). 

\bibitem{DL1968}
 F.J. Dyson, A. Lenard, 
Stability of matter I, {\em J. Math. Phys.} {\bf 8} (1967), 423-434; 
II, ibid. {\bf 9} (1968), 698-711. 

\bibitem{Euler1747}
L. Euler, Demonstratio gemina theorematis Newtoniani, 
quo traditur relatio inter coefficientes cuiusvis aequiationis
algebraicae et summas potestatum radicum eiusdem, 
{\em Opuscula varii argumenti} {\bf 2} (1750), 108-120.
(English translation by J. Bell: 
A double demonstration of a theorem of Newton, which gives a relation between 
the coefficients of an algebnraic equation and the sums of the powers of its roots,
arXiv:0707.0699 [math.HO])


\bibitem{GV2005}
G. Giuliani, G. Vignale,
 Quantum Theory of the Electron Liquid,
 Cambridge University Press, Cambridge, 2005. 

            
  \bibitem{GSS87} M. Grillakis, J. Shatah, W. Strauss, 
 Stability theory of solitary waves in the presence of symmetry. I,
  {\em  J. Funct. Anal.} {\bf 74} (1987), 160-197.
  
\bibitem{Her1}
 L. H\"ormander, The Analysis of Linear Partial Differential Operators. 
 I. Distribution Theory and Fourier Analysis, Springer, Berlin, 2003.
  
\bibitem{Kit}
C. Kittel, Introduction to Solid State Physics, Wiley \& Sons,  Hoboken, NJ, 2005.

\bibitem{KQ}
  A. I. Komech, Quantum Mechanics: Genesis and Achievements, Springer, Dordrecht, 2013.
  

\bibitem{K2014}
A. I. Komech,
On crystal ground state in the Schr\"odinger-Poisson model, 
 {\em SIAM J. Math. Anal.} {\bf 47} (2015), no. 2, 1001-1021.
arXiv:1310.3084

\bibitem{K2015}
A. I. Komech,
On crystal ground state in the Schr\"odinger-Poisson model with point ions, 
{\em Math. Notes} {\bf 99} (2016), no. 6, 886-894. 
arXiv:1409.1847



\bibitem{KKpl2015}
A. Komech, E. Kopylova,
 On the linear stability of crystals 
in the Schr\"odinger-Poisson model, accepted in {\em J. Stat. Phys.}, 2016.  arXiv:1505.07074





\bibitem{BL2005}	
C. Le Bris, P.-L. Lions,
From atoms to crystals: a mathematical journey,
{\em Bull. Am. Math. Soc., New Ser.}
{\bf  42} (2005), no. 3, 291-363.



\bibitem{LL1969}
J.L. Lebowitz, E.H. Lieb, Existence of thermodynamics for real matter with Coulomb
forces, {\em Phys. Rev. Lett.} {\bf 22} (13) (1969), 631-634.

\bibitem{LL1973}
J.L. Lebowitz, E.H. Lieb, 
Lectures on the thermodynamic limit for Coulomb systems, in:
Springer Lecture Notes in Physics, Vol. 20, Springer, 1973, pp. 136-161.



\bibitem{LS2014-1} 
M. Lewin, J. Sabin,
The Hartree equation for infinitely many particles. I. Well-posedness theory,
{\color{red} {\em Comm. Math. Phys.} {\bf 334} (2015), no. 1, 117-170.}
arXiv:1310.0603.

\bibitem{LS2014-2} 
M. Lewin, J. Sabin,
The Hartree equation for infinitely many particles. II. Dispersion and scattering in 2D,
{\color{red} {\em Anal. PDE} {\bf 7} (2014), no. 6, 1339-1363.}
arXiv:1310.0604.

\bibitem{LS2010}
 E.H. Lieb, R. Seiringer, The Stability of Matter in Quantum Mechanics,
Cambridge University Press, Cambridge, 2009.



\bibitem{Lions}
J.-L. Lions, Quelques m\'ethodes de r\'esolution des probl\`emes aux limites non lin\'eaires,
Dunod; Gauthier-Villars, Paris, 1969.

\bibitem{Stratton}
J.A. Stratton, Electromagnetic Theory, John Wiley \& Sons, Inc.,
Hoboken, New Jersey, 2007.




\bibitem{Zim}
M. Ziman, The Calculation of Bloch Functions, Academic Press, NY, 1971.





\end{thebibliography}
\end{document}